\def\beqnn{\begin{eqnarray*}}\def\eeqnn{\end{eqnarray*}}
\newtheorem{theorem}{Theorem}[section]
\newtheorem{lemma}[theorem]{Lemma}
\newtheorem{proposition}[theorem]{Proposition}
\theoremstyle{definition}
\newtheorem{remark}[theorem]{Remark}
\theoremstyle{question}
\numberwithin{equation}{section}
\begin{document}

\title{Hilbert-type operators acting between weighted Fock spaces}


\author{Jianjun Jin}
\address{School of Mathematics Sciences, Hefei University of Technology, Xuancheng Campus, Xuancheng 242000, P.R.China}
\email{jin@hfut.edu.cn, jinjjhb@163.com}

 \author{Shuan Tang}
  \address{School of Mathematics Sciences, Guizhou Normal University, Guiyang 550001, P.R.China} \email{tsa@gznu.edu.cn}

\author{Xiaogao Feng}
  \address{College of Mathematics and Information, China West Normal University, Nanchong 637009, P.R.China}
 \email{fengxiaogao603@163.com}

\thanks{The first author supported by National Natural Science
Foundation of China (Grant Nos. 11501157). The second author was supported by National Natural Science Foundation of China (Grant Nos. 12061022) and the foundation of Guizhou Provincial Science and Technology Department (Grant Nos. [2017]7337 and [2017]5726).
The third author was supported by National Natural Science Foundation of China (Grant
Nos. 11701459).
   }

\thanks{ {\bf{Data Availability Statement:} the datasets generated during and/or analysed during the current study are available from the corresponding author on reasonable request.}
}


\subjclass[2010]{47B35; 30H20}



\keywords{Hilbert-type operators; weighted Fock spaces; boundedness of operator; compactness of operator; generalized Carleson measure}

\begin{abstract}
In this paper we introduce and study several new Hilbert-type operators acting between the weighted Fock spaces. We provide some sufficient and necessary conditions for the boundedness and compactness  of certain Hilbert-type operators from one weighted Fock space to another. \end{abstract}

\maketitle
\section{Introduction and main results}

In this paper we will use $C, C_1, C_2, \cdots$ to denote universal positive constants that might change from one line to another.  For two positive numbers $A, B$, we write
$A \preceq B$, or $A \succeq B$, if there exists a positive constant $C$ independent of $A$ and $B$ such that $A \leq C B$, or $A \geq C B$, respectively. We will write $A \asymp B$ if both $A \preceq B$ and $A \succeq B$.

Let $\mathbb{C}$ be the complex plane,  $\Omega$  be an open domain in $\mathbb{C}$. Let $\mathcal{H}(\Omega)$ be the class of all holomorphic functions on $\Omega$. In particular, $\mathcal{H}(\mathbb{C})$ is the class of all entire functions.

The {\em Fock space}, denoted by $\mathcal{F}^{2}$, is defined as
$$\mathcal{F}^{2}=\{f\in \mathcal{H}(\mathbb{C}): \|f\|_2^2:=\frac{1}{\pi}\int_{\mathbb{C}}|f(z)e^{-\frac{1}{2}|z|^2}|^2dA(z)<\infty\},$$
where $dA$ is the Lebesgue area measure, see \cite{Zhu}.

For an entire function $f=\sum_{n=0}^{\infty}a_nz^n$.  A  calculation yields that
$$\|f\|_2^2=2\int_{0}^{\infty}(\sum_{n=0}^{\infty}|a_n|^2r^{2n})e^{-r^2}rdr=\sum_{n=0}^{\infty}|a_n|^2 n!.$$
Thus we have $f\in \mathcal{F}^2$ if and only if $\sum_{n=0}^{\infty}|a_n|^2 n!<\infty.$

For $\theta>0$, $\alpha\in \mathbb{R}$, we introduce the {\em weighted Fock space}, which is denoted by $\mathcal{F}^{2}_{\theta, \alpha}$ and defined as
$$\mathcal{F}^{2}_{\theta, \alpha}=\{f=\sum_{n=0}^{\infty} a_n z^n\in \mathcal{H}(\mathbb{C}): \|f\|_{\theta, \alpha}^2:=\sum_{n=0}^{\infty}(n+\theta)^{\alpha}|a_n|^2n!<\infty\}.$$

\begin{remark} When $\alpha=0$,  $\mathcal{F}^{2}_{\theta, \alpha}$ becomes the Fock space $\mathcal{F}^{2}$.  When $\alpha\in \mathbb{N}$,  we know from \cite{CZ} that an entire function $f(z)$ belongs to the Fock-Sobolev space $F^{2, \alpha}$ on $\mathbb{C}$, if and only if $z^{\alpha}f(z)\in \mathcal{F}^{2}$. Then it is easy to see  that $\mathcal{F}^{2}_{\theta, \alpha}$ coincides with the Fock-Sobolev space $F^{2, \alpha}$ on $\mathbb{C}$ for $\alpha\in \mathbb{N}$.
 \end{remark}
 
\begin{remark}
We notice that, if the coefficients of a power series $f(z)=\sum_{n=0}^{\infty} a_n z^n$ satisfy the condition $ \sum_{n=0}^{\infty}(n+\theta)^{\alpha}|a_n|^2n!<\infty$, then $f(z)$
is an entire function.
 \end{remark}

 \begin{remark}
 If  $f(z)=\sum_{n=0}^{\infty} a_n z^n \in \mathcal{F}^{2}_{\theta, \alpha}$, then, by Cauchy-Schwartz inequality, we see that  there is a positive constant $C$, depends only on $\theta, \alpha$, such that
  \begin{equation}|f(z)|\leq  \left[\sum_{n=0}^{\infty}(n+\theta)^{\alpha}|a_n|^2n!\right]^{\frac{1}{2}}\left [\sum_{n=0}^{\infty}(n+\theta)^{-\alpha}\frac{|z|^{2n}}{n!}\right]^{\frac{1}{2}}\leq e^{C|z|^2} \|f\|_{\theta, \alpha}, \nonumber \end{equation}
for all $z\in \mathbb{C}$.
   \end{remark}

It should be pointed out that the weighted Fock space $\mathcal{F}^{2}_{\theta, \alpha}$  can be considered as a special case of the weighted Hardy spaces, which have been detailed studied in \cite{Sh}.  We recall the  definition of the weighted Hardy spaces.  Let $\beta=\{\beta_n\}_{n=0}^{\infty}$ be a sequence of positive numbers. The weighted Hardy space $\mathcal{H}^2(\beta)$ is the class of formal complex power series $f(z)=\sum_{n=0}^{\infty}a_nz^n$ such that
$$\sum_{n=0}^{\infty}|a_n|^2 \beta_n^2<\infty.$$

From \cite{Sh}, we know that,  if $\beta_{n+1}/\beta_n \rightarrow 1$, when $n\rightarrow \infty,$  $\mathcal{H}^2(\beta)$ is a Hilbert space of holomorphic functions on the unit disk $\mathbb{D}$ with the inner product $$\langle f,g\rangle=\sum_{n=0}^{\infty}a_n \overline{b}_{n} \beta_n^2,$$
where $f=\sum_{n=0}^{\infty}a_nz^n$ and $g=\sum_{n=0}^{\infty}b_nz^n$.   In particular, if  we take $\beta_n=1$ for all $n\in \mathbb{N}_0=\mathbb{N}\cup\{0\}$, then $\mathcal{H}^2(\beta)$ reduces the classic Hardy space $H^2(\mathbb{D})$,  which is  a
Hilbert space of holomorphic functions on the unit disk $\mathbb{D}$ with the inner product $\langle f,g\rangle=\sum_{n=0}^{\infty}a_n \overline{b}_{n},$ see \cite{Zh}.

\begin{remark}Define the inner product on $\mathcal{F}_{\theta, \alpha}^{2}$ as
$$\langle f,g\rangle=\sum_{n=0}^{\infty}(n+\theta)^{\alpha}a_n \overline{b}_{n} n!,$$
where $f=\sum_{n=0}^{\infty}a_nz^n$ and $g=\sum_{n=0}^{\infty}b_nz^n.$  Consider the operator $W$ on $\mathcal{F}_{\theta, \alpha}^{2}$, defined as
 $$Wf(z)=\sum_{n=0}^{\infty} [(n+\theta)^{\frac{\alpha}{2}}\sqrt{n!}]a_nz^n, \quad \forall f(z)=\sum_{n=0}^{\infty}a_n z^n \in \mathcal{F}_{\theta, \alpha}^{2}$$

Then we conclude that the operator $W$ on $\mathcal{F}_{\theta, \alpha}^{2}$ is an isometric isomorphism from $\mathcal{F}_{\theta, \alpha}^{2}$ to the Hardy space $H^2(\mathbb{D})$. Consequently, we see that $\mathcal{F}_{\theta, \alpha}^{2}$ is a Hilbert space with the above inner product.  For any nonnegative integer $n$, let $e_n(z)=(n!)^{-\frac{1}{2}}(n+\theta)^{-\frac{\alpha}{2}}z^n.$  Then we see that the set $\{e_n\}$ is an orthonormal basis of $\mathcal{F}_{\theta, \alpha}^{2}$.  By the standard theory of reproducing kernel Hilbert spaces(see Theorem 2.4 in \cite{PR1}), we get that the reproducing kernel $K_{\theta, \alpha}(z,w)$ for $\mathcal{F}_{\theta, \alpha}^{2}$ is
 $$K_{\theta, \alpha}(z,w)=\sum_{n=0}^{\infty} e_n(z) \overline{e_n(w)}=\sum_{n=0}^{\infty}(n+\theta)^{-\alpha}\frac{(z\overline{w})^n}{n!}.$$

In particular, when $\alpha=0$, we have $K_{\theta, 0}(z,w)=e^{z\overline{w}}$; when $\theta\in \mathbb{N}, \alpha=-1$, we have $K_{\theta, -1}(z,w)=e^{z\overline{w}}(z\overline{w}+\alpha).$
\end{remark}


For $f=\sum_{k=0}^{\infty}a_k z^k \in \mathcal{H}(\Omega)$, $\Omega$ is an open domain  in $\mathbb{C}$. The {\em Hilbert operator} $\mathbf{H}$, which is an operator on spaces of holomorphic functions by its action on the Taylor coefficients, is defined as,
\begin{equation*} \mathbf{H}(f)(z):=\sum_{n=0}^\infty \left(\sum_{k=0}^{\infty} \frac{a_k}{k+n+1} \right)z^n.
\end{equation*}

During the last two decades, the Hilbert operator $\mathbf{H}$ and its generalizations defined on various spaces of holomorphic
functions on the unit disk $\mathbb{D}$ in $\mathbb{C}$ have been much investigated.  See, for example, [1-14], \cite{PR},  \cite{YZ1}, \cite{YZ2}.

For $\lambda>0$,  $f=\sum_{k=0}^{\infty}a_k z^k \in \mathcal{H}(\mathbb{C})$, we define the following {\em Hilbert-type operator $ \mathbf{H}_{\lambda}$}  as
\begin{equation*} \mathbf{H}_{\lambda}(f)(z):=\sum_{n=0}^\infty \left[\frac{1}{\sqrt{n!}}\sum_{k=0}^{\infty} \frac{a_k\sqrt{k!}}{(k+n)^{\lambda}+1} \right]z^n.
\end{equation*}

In this paper, we first study the boundedness of $\mathbf{H}_{\lambda}$ acting from one weighted Fock space to another,  and obtain that
\begin{theorem}\label{main-1}
Let $\lambda>0, -1<\alpha, \beta<1$. Then, for any $\theta>0$,  $ \mathbf{H}_{\lambda}$ is bounded from $\mathcal{F}_{\theta, \alpha}^{2}$ to $\mathcal{F}_{\theta, \beta}^{2}$ if and only if $\lambda\geq 1+\frac{1}{2}(\beta-\alpha)$ .
\end{theorem}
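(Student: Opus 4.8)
The plan is to reduce the operator‑theoretic statement to a boundedness question for a single nonnegative matrix on $\ell^2$, and then settle it with a Schur test for the sufficiency and a family of block test sequences for the necessity.

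First I would set up the linearizing isometry. Writing $f=\sum_k a_kz^k$ and putting $b_k=\sqrt{k!}\,(k+\theta)^{\alpha/2}a_k$, the assignment $f\mapsto(b_k)$ is an isometric isomorphism of $\mathcal{F}^2_{\theta,\alpha}$ onto $\ell^2$. Computing the Taylor coefficients of $\mathbf{H}_\lambda(f)$ and inserting them into the definition of $\|\cdot\|_{\theta,\beta}$, one finds $\|\mathbf{H}_\lambda(f)\|_{\theta,\beta}^2=\|Tb\|_{\ell^2}^2$, where $T$ is the operator on $\ell^2$ with the nonnegative kernel
\[ M(n,k)=\frac{(n+\theta)^{\beta/2}(k+\theta)^{-\alpha/2}}{(k+n)^\lambda+1}. \]
Hence $\mathbf{H}_\lambda\colon\mathcal{F}^2_{\theta,\alpha}\to\mathcal{F}^2_{\theta,\beta}$ is bounded if and only if $T$ is bounded on $\ell^2$, with equal norms, and the whole theorem becomes the question of which $\lambda$ make $M$ a bounded matrix. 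The critical exponent $\lambda_0:=1+\tfrac12(\beta-\alpha)$ is exactly the value for which the continuous analogue $x^{\beta/2}y^{-\alpha/2}(x+y)^{-\lambda}$ is homogeneous of degree $-1$, which is the heuristic pinning down the answer. The technical workhorse will be the estimate
\[ \sum_{k=0}^\infty\frac{(k+\theta)^a}{(k+n)^\lambda+1}\asymp(n+\theta)^{a+1-\lambda},\qquad -1<a<\lambda-1, \]
proved by comparing the sum with $\int_0^\infty(x+\theta)^a[(x+n)^\lambda+1]^{-1}\,dx$ and splitting at $x=n$: the constraint $a>-1$ controls the part $x\le n$ and $a<\lambda-1$ controls the tail $x\ge n$. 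Verifying this estimate with constants uniform in $n$, and reading it off exactly at the borderline, is where I expect the main routine work to lie.

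For sufficiency I would first treat the boundary case $\lambda=\lambda_0$ via the Schur test with weight $p_k=(k+\theta)^{-1/2}$. Applying the displayed estimate with $a=-\tfrac12(\alpha+1)$ gives $\sum_k M(n,k)p_k\asymp(n+\theta)^{-1/2}=p_n$, where finiteness of the constant uses precisely $\alpha<1$ (so $a>-1$) and $\beta>-1$ (so $a<\lambda_0-1$); the transposed condition $\sum_n M(n,k)p_n\preceq p_k$ is symmetric and holds for the same two reasons. Thus $T$ is bounded at $\lambda=\lambda_0$. For $\lambda>\lambda_0$ I would simply note that $(k+n)^\lambda+1\ge(k+n)^{\lambda_0}+1$ for all $n,k$, so $0\le M_\lambda(n,k)\le M_{\lambda_0}(n,k)$ pointwise; positivity then yields $|(T_\lambda x)_n|\le(T_{\lambda_0}|x|)_n$ and hence $\|T_\lambda\|\le\|T_{\lambda_0}\|<\infty$. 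This establishes boundedness for every $\lambda\ge\lambda_0$.

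For necessity, assume $\lambda<\lambda_0$ and test $T$ against the block sequences $x^{(N)}_k=\mathbf{1}_{\{N/2\le k\le N\}}$. On the square $N/2\le n,k\le N$ each factor of $M$ is comparable to a power of $N$, namely $(n+\theta)^{\beta/2}\asymp N^{\beta/2}$, $(k+\theta)^{-\alpha/2}\asymp N^{-\alpha/2}$ and $(k+n)^\lambda+1\asymp N^\lambda$, and there are $\asymp N^2$ such pairs, so
\[ \langle Tx^{(N)},x^{(N)}\rangle=\sum_{n,k}M(n,k)\,x^{(N)}_k x^{(N)}_n\succeq N^{2+\frac12(\beta-\alpha)-\lambda}=N^{1+\lambda_0-\lambda}. \]
Since $\|x^{(N)}\|_{\ell^2}^2\asymp N$ and $\langle Tx^{(N)},x^{(N)}\rangle\le\|T\|\,\|x^{(N)}\|_{\ell^2}^2$, boundedness of $T$ would force $\|T\|\succeq N^{\lambda_0-\lambda}$, which blows up as $N\to\infty$ because $\lambda<\lambda_0$. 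This contradiction shows that $T$, hence $\mathbf{H}_\lambda$, is unbounded when $\lambda<\lambda_0$, and combined with the previous paragraph completes the stated equivalence.
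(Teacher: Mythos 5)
Your argument is correct, and the sufficiency half is essentially the paper's: the weighted Cauchy--Schwarz in the paper's Lemma \ref{lem-2}, with the splitting factor $(k+\theta)^{\frac{1+\alpha}{4}}/(n+\theta)^{\frac{1-\beta}{4}}$ and the integral comparisons of Lemma \ref{lem-0}, is exactly your Schur test in the coordinates $b_k=\sqrt{k!}\,(k+\theta)^{\alpha/2}a_k$, and the paper likewise reduces $\lambda>\lambda_0$ to the endpoint by the pointwise domination $[(k+n)^{\lambda}+1]^{-1}\leq[(k+n)^{\lambda_0}+1]^{-1}$. Where you genuinely diverge is the necessity. The paper tests against the functions $f_{\varepsilon}$ with coefficients $a_k\asymp(k+\theta)^{-\frac{\alpha+1+\varepsilon}{2}}/\sqrt{k!}$, normalized so that $\|f_\varepsilon\|_{\theta,\alpha}\leq 1$ uniformly, and derives a contradiction because the lower bound $\|\mathbf{H}_\lambda f_\varepsilon\|_{\theta,\beta}^2\succeq\varepsilon\sum_n(n+\theta)^{(\beta-\alpha)+2(1-\lambda)-1-\varepsilon}$ involves a divergent series once $\varepsilon<(\beta-\alpha)+2(1-\lambda)$; you instead use the localized blocks $x^{(N)}=\mathbf{1}_{\{N/2\leq k\leq N\}}$ and the Rayleigh quotient of the nonnegative matrix. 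Both are sound; your version is slightly more economical (finitely supported test vectors, no auxiliary parameter to send to zero, and every step is an elementary counting estimate on a square) and it yields the quantitative statement $\|T_\lambda\|\succeq N^{\lambda_0-\lambda}$ on truncations, whereas the paper's family isolates the failure in a single divergent series and is the template reused for its Carleson-measure theorems. One small point worth writing out if you formalize this: your two-sided estimate $\sum_k(k+\theta)^a[(k+n)^{\lambda}+1]^{-1}\asymp(n+\theta)^{a+1-\lambda}$ should be checked to hold uniformly down to $n=0$ (it does, since $a<\lambda-1$ makes the $n=0$ sum finite), which is the analogue of the paper's device of inserting $2\theta$ into the denominator.
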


Next we consider the case $\lambda=1$. We see from Theorem \ref{main-1} that $ \mathbf{H}_{1}$ is not bounded from $\mathcal{F}_{\theta, \alpha}^{2}$ to $\mathcal{F}_{\theta, \beta}^{2}$ if  $\alpha<\beta$.   We notice that
$$\frac{1}{k+n+1}=\int_{0}^1t^{k+n}dt,\: k, n\in \mathbb{N}_0.$$

To make $ \mathbf{H}_{1}$ bounded from $\mathcal{F}_{\theta, \alpha}^{2}$ to $\mathcal{F}_{\theta, \beta}^{2}$ when $\alpha<\beta$.  For $f=\sum_{n=0}^{\infty}a_n z^n \in \mathcal{H}(\mathbb{C})$,  let $\mu$ be a positive Bore measure on $[0, 1)$,  we introduce the {\em Hilbert-type operator $\mathbf{H}_{\mu}$}, which is defined as
\begin{equation*} \mathbf{H}_{\mu}(f)(z):=\sum_{n=0}^\infty \left(\frac{1}{\sqrt{n!}}\sum_{k=0}^{\infty}\mu[k+n] a_k\sqrt{k!}\right)z^n.
\end{equation*}

Where
\begin{equation}\label{mu}\mu{[n]}=\int_{0}^1 t^{n}d\mu(t),\: n\in \mathbb{N}_{0}.\end{equation}

We then study the problem of characterizing measures $\mu$ such that $\mathbf{H}_{\mu}: \mathcal{F}_{\theta, \alpha}^{2}\rightarrow \mathcal{F}_{\theta, \beta}^{2}$ is bounded and prove that

\begin{theorem}\label{main-2}
Let $-1<\alpha, \beta<1$,  $\mu$ be a finite positive Bore measure on $[0, 1)$ and $\mathbf{H}_{\mu}$ be as above. Then, for any $\theta>0$,   $\mathbf{H}_{\mu}: \mathcal{F}_{\theta, \alpha}^{2}\rightarrow \mathcal{F}_{\theta, \beta}^{2}$ is bounded if and only if $\mu$ is a $[1+\frac{1}{2}(\beta-\alpha)]$-Carleson measure on $[0, 1)$.
\end{theorem}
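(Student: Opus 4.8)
The plan is to linearize the operator and reduce the statement to the $\ell^2$-boundedness of a single positive Hankel-type matrix, and then to handle the two implications by a Schur test and by a family of block test functions, respectively. For $f=\sum_k a_kz^k\in\mathcal{F}^2_{\theta,\alpha}$ and $g=\sum_n b_nz^n\in\mathcal{F}^2_{\theta,\beta}$, set $\hat a_k=(k+\theta)^{\alpha/2}\sqrt{k!}\,a_k$ and $\hat b_n=(n+\theta)^{\beta/2}\sqrt{n!}\,b_n$, so that the isometric isomorphisms onto $H^2(\mathbb{D})$ described in the Introduction give $\|f\|_{\theta,\alpha}=\|\hat a\|_{\ell^2}$ and $\|g\|_{\theta,\beta}=\|\hat b\|_{\ell^2}$. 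A direct computation using $\mu[n+k]=\int_0^1 t^{n+k}\,d\mu(t)$ then shows
\[
\langle \mathbf{H}_{\mu} f,g\rangle_{\theta,\beta}=\langle M\hat a,\hat b\rangle_{\ell^2},\qquad M_{n,k}=(n+\theta)^{\beta/2}(k+\theta)^{-\alpha/2}\mu[n+k],
\]
so that $\mathbf{H}_{\mu}$ is bounded from $\mathcal{F}^2_{\theta,\alpha}$ to $\mathcal{F}^2_{\theta,\beta}$ if and only if $M$ is bounded on $\ell^2(\mathbb{N}_0)$, with equal norms. Writing $s=1+\frac{1}{2}(\beta-\alpha)\in(0,2)$, the bridge between the two formulations is the elementary moment lemma: $\mu$ is an $s$-Carleson measure if and only if $\mu[m]\preceq(m+1)^{-s}$. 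The forward direction follows by integrating $\int_0^1 t^m\,d\mu$ by parts against $F(t)=\mu([t,1))$ together with a Beta-function estimate, and the converse from the bound $t^m\succeq 1$ on $[1-1/m,1)$.

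For the sufficiency I assume $\mu$ is $s$-Carleson, so that $M_{n,k}\preceq(n+\theta)^{\beta/2}(k+\theta)^{-\alpha/2}(n+k+1)^{-s}$, and apply the Schur test to this majorant with the single weight $p_n=(n+\theta)^{-1/2}$. The estimate rests on the standard asymptotics
\[
\sum_{k=0}^{\infty}(k+1)^{-a}(n+k+1)^{-b}\asymp(n+1)^{1-a-b}\qquad(0<a<1,\ a+b>1),
\]
and one checks that with this weight both the row sums $\sum_k M_{n,k}p_k$ and the column sums $\sum_n M_{n,k}p_n$ are $\preceq p_n$ and $\preceq p_k$ respectively. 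The exponent bookkeeping collapses to $-\tfrac12$ on both sides precisely because $s=1-\tfrac12(\alpha-\beta)$, and the hypotheses $-1<\alpha,\beta<1$ are exactly what place both sums in the convergent regime $0<a<1,\ a+b>1$; this yields $\|M\|_{\ell^2\to\ell^2}<\infty$.

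For the necessity I assume $M$ is bounded and test the (real, nonnegative) bilinear form against block vectors. For $N\ge1$ put $y_k=(k+\theta)^{\alpha/2-1/2}\mathbf{1}_{[N,2N)}(k)$ and $x_n=(n+\theta)^{-\beta/2-1/2}\mathbf{1}_{[N,2N)}(n)$, which are the coefficient sequences $\hat a,\hat b$ of two polynomials, with $\|y\|_{\ell^2}^2\asymp N^{\alpha}$ and $\|x\|_{\ell^2}^2\asymp N^{-\beta}$. Writing the form as $\langle My,x\rangle=\int_0^1 P(t)Q(t)\,d\mu(t)$ with $P(t)=\sum_n(n+\theta)^{\beta/2}t^n x_n\ge0$ and $Q(t)=\sum_k(k+\theta)^{-\alpha/2}t^k y_k\ge0$, the weights are arranged so that on the block $t^k\asymp1$ and hence $P(t),Q(t)\asymp N^{1/2}$ for $t\in[1-1/N,1)$. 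Restricting the nonnegative integral to $[1-1/N,1)$ gives $\langle My,x\rangle\succeq N\,\mu([1-1/N,1))$, while boundedness gives $\langle My,x\rangle\le\|M\|\,\|x\|\,\|y\|\asymp\|M\|\,N^{(\alpha-\beta)/2}$. Comparing the two and recalling $(\alpha-\beta)/2-1=-s$ yields $\mu([1-1/N,1))\preceq N^{-s}$, and a routine monotonicity argument upgrades this to $\mu([1-h,1))\preceq h^s$ for all $h\in(0,1)$, i.e. $\mu$ is $s$-Carleson.

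The \emph{main obstacle} is the necessity direction. The crude entrywise bound $M_{n,k}\le\|M\|$ only gives $\mu[2n]\preceq(n+\theta)^{-(\beta-\alpha)/2}$, which falls short of the required decay $\mu[2n]\preceq(2n)^{-s}$ by a full factor of $n$. Recovering that missing power is exactly why the test vectors must be spread over an entire dyadic block rather than concentrated at one index; the delicate point is to choose the block exponents so that $P$ and $Q$ are simultaneously comparable to $N^{1/2}$ on the Carleson box $[1-1/N,1)$ while keeping the product $\|x\|\,\|y\|$ under control. (In the sufficiency direction the analogous technical heart is the uniform off-diagonal sum estimate, but there the uniform weight $(n+\theta)^{-1/2}$ makes the Schur test go through cleanly.)
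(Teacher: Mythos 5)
Your proof is correct, and its overall architecture (a Schur-type test for sufficiency, explicit test functions for necessity) parallels the paper's, but both halves are executed differently enough to be worth comparing. For sufficiency, the paper never conjugates to $\ell^2$: it works directly with the coefficients $a_k\sqrt{k!}$ and runs a Cauchy--Schwarz argument with the two-parameter splitting weight $(k+\theta)^{(1+\alpha)/4}/(n+\theta)^{(1-\beta)/4}$, the required row/column sums being the quantities $w^{[1]}_{\alpha,\beta}(n)$ and $w^{[2]}_{\alpha,\beta}(k)$ estimated by Beta integrals in Lemma 2.1; your single weight $p_n=(n+\theta)^{-1/2}$ applied to the conjugated matrix $M$ is an equivalent Schur test and your exponent bookkeeping checks out (both regimes $0<a<1$, $a+b>1$ are exactly guaranteed by $-1<\alpha,\beta<1$). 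The moment bound $\mu[m]\preceq (m+1)^{-s}$ is the paper's Lemma 2.2, proved the same way (integration by parts plus the Beta function); the converse implication of your moment lemma is true but not actually needed. For necessity, the paper tests against the geometric family $a_k=(1-w^2)^{1/2}(k+\theta)^{-\alpha/2}w^k/\sqrt{k!}$ and invokes $\sum_n n^{c-1}w^{2n}\asymp(1-w^2)^{-c}$, restricting the $\mu$-integral to $[w,1)$ and using $t^k\ge w^k$ there; your dyadic block vectors supported on $[N,2N)$ accomplish the same localization via $t^n\asymp 1$ on $[1-1/N,1)$, and the norm and pairing computations you sketch are correct. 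The geometric family gives the Carleson bound at every $w\in(0,1)$ in one stroke, whereas the block construction gives it at the scales $h=1/N$ and needs the (routine) monotonicity upgrade; on the other hand, your explicit reduction to the positive matrix $M_{n,k}=(n+\theta)^{\beta/2}(k+\theta)^{-\alpha/2}\mu[n+k]$ makes the duality between the two halves more transparent and would transfer verbatim to other weighted Hardy spaces with power weights.
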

Here,  for $s>0$, a positive Borel measure $\mu$ on $[0,1)$, we say $\mu$ is an $s$-Carleson measure if there is a constant $C_1>0$ such that $$\mu([t, 1))\leq C_1 (1-t)^s$$ holds for all $t\in [0, 1)$.

Also, we characterize measures $\mu$ such that $\mathbf{H}_{\mu}: \mathcal{F}_{\theta, \alpha}^{2}\rightarrow \mathcal{F}_{\theta, \beta}^{2}$ is compact and shall show that
 \begin{theorem}\label{main-3}
Let $-1<\alpha, \beta<1$,  $\mu$ be a finite positive Bore measure on $[0, 1)$ and $\mathbf{H}_{\mu}$ be as above. Then, for any $\theta>0$,   $\mathbf{H}_{\mu}: \mathcal{F}_{\theta, \alpha}^{2}\rightarrow \mathcal{F}_{\theta, \beta}^{2}$ is compact if and only if $\mu$ is a vanishing $[1+\frac{1}{2}(\beta-\alpha)]$-Carleson measure on $[0, 1)$.
\end{theorem}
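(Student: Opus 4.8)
The plan is to prove both implications by writing $\mathbf{H}_\mu$ in a weighted Hankel form and then leaning on the boundedness criterion of Theorem \ref{main-2} in a quantitative way. Throughout put $s=1+\tfrac12(\beta-\alpha)$. The basic computation, obtained by inserting \eqref{mu} and interchanging the (absolutely convergent) summations with the integration against the finite measure $\mu$, is the bilinear identity, valid for $f=\sum_k a_k z^k\in\mathcal{F}_{\theta,\alpha}^{2}$ and $g=\sum_n b_n z^n\in\mathcal{F}_{\theta,\beta}^{2}$,
\[
\langle \mathbf{H}_\mu f, g\rangle_{\theta,\beta}=\int_0^1 F(t)\,\overline{G(t)}\,d\mu(t),\qquad
F(t)=\sum_{k\ge0}\sqrt{k!}\,a_k t^k,\quad
G(t)=\sum_{n\ge0}(n+\theta)^{\beta}\sqrt{n!}\,b_n t^n .
\]
Equivalently, after the isometry $W$ introduced above, $\mathbf{H}_\mu$ is unitarily equivalent to the matrix operator $T_{n,k}=\mu[n+k](n+\theta)^{\beta/2}(k+\theta)^{-\alpha/2}$ on $\ell^2(\mathbb{N}_0)$. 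I would use whichever of these two descriptions is more convenient at each step; since both compactness and the Hilbert--Schmidt norm are unitarily invariant, passing between them is free.

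For the sufficiency I would assume $\mu([t,1))/(1-t)^s\to0$ as $t\to1^-$ and split $\mu=\mu_r+\nu_r$ with $\mu_r=\mu|_{[0,r)}$ and $\nu_r=\mu|_{[r,1)}$, so that $\mathbf{H}_\mu=\mathbf{H}_{\mu_r}+\mathbf{H}_{\nu_r}$ by linearity. The inner piece is Hilbert--Schmidt: since $\mu_r[n+k]=\int_0^r t^{n+k}\,d\mu\le r^{n+k}\mu([0,1))$, the associated matrix obeys $\sum_{n,k}|T^{(r)}_{n,k}|^2\le \mu([0,1))^2\big(\sum_n r^{2n}(n+\theta)^{\beta}\big)\big(\sum_k r^{2k}(k+\theta)^{-\alpha}\big)<\infty$, because for $r<1$ each geometric-times-polynomial series converges; hence $\mathbf{H}_{\mu_r}$ is compact. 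For the outer piece one checks that the $s$-Carleson constant $\sup_t \nu_r([t,1))/(1-t)^s$ tends to $0$ as $r\to1$: for $t\ge r$ it equals $\mu([t,1))/(1-t)^s$, small by the vanishing hypothesis, and for $t<r$ the ratio is $\mu([r,1))/(1-t)^s\le\mu([r,1))/(1-r)^s$, again small. Invoking the quantitative form of Theorem \ref{main-2}, namely $\|\mathbf{H}_\nu\|\preceq \sup_t\nu([t,1))/(1-t)^s$ with implied constant independent of $\nu$, yields $\|\mathbf{H}_{\nu_r}\|\to0$. Thus $\|\mathbf{H}_\mu-\mathbf{H}_{\mu_r}\|\to0$, so $\mathbf{H}_\mu$ is a norm limit of compact operators and is therefore compact.

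For the necessity I would assume $\mathbf{H}_\mu$ compact and test against $f_b(z)=\sum_k (k!)^{-1/2}b^k z^k$ and $g_b(z)=\sum_n (n!)^{-1/2}b^n z^n$ as $b\uparrow1$. From $\sum_k(k+\theta)^{\rho}b^{2k}\asymp(1-b^2)^{-(\rho+1)}$ (valid for $\rho>-1$) one gets $\|f_b\|_{\theta,\alpha}^2\asymp(1-b^2)^{-(\alpha+1)}$ and $\|g_b\|_{\theta,\beta}^2\asymp(1-b^2)^{-(\beta+1)}$, finite since $-1<\alpha,\beta$; moreover every fixed basis coefficient of $f_b/\|f_b\|_{\theta,\alpha}$ and of $g_b/\|g_b\|_{\theta,\beta}$ tends to $0$, so these normalized families are weakly null. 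In the bilinear identity $F_b(t)=(1-bt)^{-1}$ and $G_b(t)\asymp(1-bt)^{-(\beta+1)}$, both with positive terms, so restricting the integral to $[b,1)$, where $1-bt\le1-b^2$, gives
\[
\langle\mathbf{H}_\mu f_b,g_b\rangle\ \succeq\ \int_b^1 (1-bt)^{-(\beta+2)}\,d\mu(t)\ \ge\ (1-b^2)^{-(\beta+2)}\,\mu([b,1)).
\]
Dividing by $\|f_b\|_{\theta,\alpha}\|g_b\|_{\theta,\beta}\asymp(1-b^2)^{-(\alpha+\beta+2)/2}$ collapses the powers of $1-b^2$ to exactly $-s$ (the arithmetic $-(\beta+2)+\tfrac12(\alpha+\beta+2)=-s$), so $\langle\mathbf{H}_\mu(f_b/\|f_b\|_{\theta,\alpha}),\,g_b/\|g_b\|_{\theta,\beta}\rangle\succeq \mu([b,1))/(1-b)^s$. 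Compactness forces the left side to $0$ along any sequence $b\to1$, whence $\mu([b,1))/(1-b)^s\to0$, i.e. $\mu$ is a vanishing $s$-Carleson measure.

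The step I expect to demand the most care is the outer estimate $\|\mathbf{H}_{\nu_r}\|\to0$: it presupposes that the proof of Theorem \ref{main-2} delivers a norm bound linear in the Carleson constant and uniform over measures, rather than a bare equivalence, so I would have to make sure that quantitative inequality is actually extracted there. The remaining ingredients — the Hilbert--Schmidt bound, the asymptotics of the weighted geometric series, and the weak null convergence of the normalized test functions — are routine, and the exponent bookkeeping producing precisely $(1-b)^{-s}$ is the same computation that governs the necessity half of Theorem \ref{main-2}.
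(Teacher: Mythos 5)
Your proposal is correct, and both halves differ from the paper's argument in a way worth recording. For sufficiency the paper does not split the measure: it truncates the operator itself, defining the finite-rank operator $\mathbf{H}_{\mu}^{[\mathfrak{N}]}$ that keeps only the output Taylor coefficients with $n\le\mathfrak{N}$, and then estimates the tail $\|(\mathbf{H}_{\mu}-\mathbf{H}_{\mu}^{[\mathfrak{N}]})f\|_{\theta,\beta}$ using the pointwise decay $\mu[n]=o\bigl((n+2\theta)^{-s}\bigr)$ from Lemma \ref{lem-1} together with Lemma \ref{lem-2}; this route is self-contained and never needs to know how the operator norm depends on the Carleson constant. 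Your decomposition $\mu=\mu|_{[0,r)}+\mu|_{[r,1)}$ with a Hilbert--Schmidt inner piece and a small-Carleson-constant outer piece is the more standard Carleson-measure argument and is equally valid, but, as you correctly flag, it hinges on the quantitative bound $\|\mathbf{H}_{\nu}\|\preceq\sup_{t}\nu([t,1))/(1-t)^{s}$ with a constant uniform in $\nu$. That bound is genuinely extractable from the paper: the constant $C_3$ in the proof of Lemma \ref{lem-1} is exactly the Carleson constant of $\nu$ and propagates linearly through the estimate of Lemma \ref{lem-2} (one should also note separately that $\nu[0]=\nu([0,1))$ is controlled by the same constant), so your outer estimate is sound, though it requires this extra bookkeeping that the paper's truncation avoids. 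For necessity the paper uses essentially your test functions --- its $\widetilde{f}_w$ is your $f_b$ normalized --- but bounds $\|\mathbf{H}_{\mu}\widetilde{f}_w\|_{\theta,\beta}$ from below directly by discarding the terms with $n=0$ or $k=0$ and restricting the moment integrals to $[w,1)$, rather than pairing against a second family $g_b$; the exponent arithmetic is identical and both give $\mu([w,1))/(1-w)^{s}\to 0$. Your bilinear version has the mild advantage of making the duality structure explicit and would generalize to non-Hilbertian targets, while the paper's one-family version is shorter here.
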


Here, an $s$-Carleson measure $\mu$ on $[0, 1)$  is  said to be a vanishing $s$-Carleson measure, if  it satisfies further that
$$\lim_{t\rightarrow 1^{-}}\frac{\mu([t, 1))}{(1-t)^s}=0.$$

The paper is organized as follows. The proofs of Theorem \ref{main-1},  \ref{main-2} and \ref{main-3} will be given in Section 3, 4 and 5, respectively. Some lemmas will be proved in Section 2. Several remarks will be finally presented in the last section.

\section{Some lemmas}
In this section, we establish some lemmas, which will be used in the proofs of Theorem \ref{main-1}, \ref{main-2} and \ref{main-3} .
\begin{lemma}\label{lem-0}
For $\theta>0, -1<\alpha, \beta<1$, we define
\begin{equation*}
w_{\alpha, \beta}^{[1]}(n):=\sum_{k=0}^{\infty}\frac{1}{(k+n+2\theta)^{1+\frac{1}{2}(\beta-\alpha)}}\cdot \frac{(n+\theta)^{\frac{1-\beta}{2}}}{(k+\theta)^{\frac{1+\alpha}{2}}},\: n\in \mathbb{N}_{0},
\end{equation*}
and
\begin{equation*}
w_{\alpha, \beta}^{[2]}(k):=\sum_{n=0}^{\infty}\frac{1}{(k+n+2\theta)^{1+\frac{1}{2}(\beta-\alpha)}}\cdot \frac{(k+\theta)^{\frac{1+\alpha}{2}}}{(n+\theta)^{\frac{1-\beta}{2}}},\: k\in \mathbb{N}_0.
\end{equation*}
Then \begin{equation}\label{w-1}
w_{\alpha, \beta}^{[1]}(n)\leq B(\frac{1+\beta}{2}, \frac{1-\alpha}{2})(n+\theta)^{-\beta}, n\in \mathbb{N}_{0};
\end{equation}
\begin{equation}\label{w-2}
w_{\alpha, \beta}^{[2]}(k)\leq B(\frac{1+\beta}{2}, \frac{1-\alpha}{2})(k+\theta)^{\alpha}, k\in \mathbb{N}_0.
\end{equation}
\end{lemma}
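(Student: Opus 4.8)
The plan is to treat both estimates by the same mechanism, since $w_{\alpha,\beta}^{[2]}$ is the transpose of $w_{\alpha,\beta}^{[1]}$ under interchanging the roles of $(n,\beta)$ and $(k,\alpha)$ — indeed the two inequalities are exactly the two Schur-test conditions for the kernel $(k+n+2\theta)^{-1-\frac{1}{2}(\beta-\alpha)}$ with the weights $(k+\theta)^{\pm\frac{1+\alpha}{2}}$, $(n+\theta)^{\mp\frac{1-\beta}{2}}$ — so I will describe $w_{\alpha,\beta}^{[1]}$ in detail and obtain $w_{\alpha,\beta}^{[2]}$ verbatim. First I would abbreviate $a=1+\frac{1}{2}(\beta-\alpha)=\frac{1+\beta}{2}+\frac{1-\alpha}{2}$ and set $v=n+\theta$, $u=k+\theta$, so that $k+n+2\theta=u+v$ and the general term of $w_{\alpha,\beta}^{[1]}(n)$ becomes $v^{\frac{1-\beta}{2}}(u+v)^{-a}u^{-\frac{1+\alpha}{2}}$. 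The structural fact I would record is that $a$ is \emph{precisely} the sum of the two Beta parameters $\frac{1+\beta}{2}$ and $\frac{1-\alpha}{2}$, both lying in $(0,1)$ because $-1<\alpha,\beta<1$; this is what makes a Beta integral appear and guarantees convergence at both endpoints.

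The core computation is the continuous model. Writing $h(u)=(u+v)^{-a}u^{-\frac{1+\alpha}{2}}$, I would evaluate the integral by the scaling substitution $u=vs$:
\[
\int_0^\infty \frac{du}{(u+v)^a u^{\frac{1+\alpha}{2}}}
= v^{\,1-a-\frac{1+\alpha}{2}}\int_0^\infty \frac{s^{\frac{1-\alpha}{2}-1}}{(1+s)^a}\,ds
= v^{-\frac{1+\beta}{2}}\,B\Bigl(\tfrac{1+\beta}{2},\tfrac{1-\alpha}{2}\Bigr),
\]
where the exponent bookkeeping uses $1-a-\frac{1+\alpha}{2}=-\frac{1+\beta}{2}$ and the standard identity $\int_0^\infty s^{p-1}(1+s)^{-(p+q)}\,ds=B(p,q)$ with $p=\frac{1-\alpha}{2}$, $q=\frac{1+\beta}{2}$. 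Multiplying by the prefactor $v^{\frac{1-\beta}{2}}$ turns this into exactly $B(\frac{1+\beta}{2},\frac{1-\alpha}{2})\,v^{-\beta}=B(\frac{1+\beta}{2},\frac{1-\alpha}{2})(n+\theta)^{-\beta}$, which is the claimed right-hand side of \eqref{w-1}. Thus the entire problem reduces to dominating the series $\sum_{k\ge0}h(\theta+k)$ by $\int_0^\infty h(u)\,du$.

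For that passage I would invoke monotonicity. Since $\frac{1+\alpha}{2}>0$ and $a>0$, the function $h$ is positive and strictly decreasing on $(0,\infty)$, so $h(\theta+k)\le\int_{\theta+k-1}^{\theta+k}h(u)\,du$ for each $k\ge1$, which sums to $\sum_{k\ge1}h(\theta+k)\le\int_\theta^\infty h(u)\,du\le\int_0^\infty h(u)\,du$. The companion estimate \eqref{w-2} follows by the identical argument after swapping $(u,\frac{1+\alpha}{2})$ with $(v,\frac{1-\beta}{2})$ and evaluating the mirror-image Beta integral, which produces the exponent $(k+\theta)^{\alpha}$.

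The main obstacle is the first term $k=0$, namely $h(\theta)=(n+2\theta)^{-a}\theta^{-\frac{1+\alpha}{2}}$, which the naive comparison does not absorb: monotonicity only gives $\sum_{k\ge0}h(\theta+k)\le h(\theta)+\int_\theta^\infty h(u)\,du$, so to recover the clean Beta constant one must still verify $h(\theta)\le\int_0^\theta h(u)\,du$. This is immediate when $\theta\ge1$, since $\int_0^\theta h\ge\theta\,h(\theta)\ge h(\theta)$ by monotonicity, so the delicate range is small $\theta$, where the weight $u^{-\frac{1+\alpha}{2}}$ concentrates mass near $u=0$. Here I would exploit that $\frac{1+\alpha}{2}<1$ (so $\int_0^\theta u^{-\frac{1+\alpha}{2}}\,du$ converges and captures the near-origin contribution) to show the excess from the $k=0$ term is controlled by the integral's behaviour at the left endpoint, or else isolate that single term and fold it into the constant. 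Pinning down this endpoint estimate \emph{uniformly} in $\theta>0$ is the step I expect to require the most care.
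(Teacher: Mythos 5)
Your method is the same as the paper's: dominate the series term-by-term by the integral $\int_0^\infty (u+v)^{-a}u^{-\frac{1+\alpha}{2}}\,du$ and evaluate it as a Beta integral by the substitution $u=vs$; the exponent bookkeeping and the convergence conditions $\frac{1+\beta}{2},\frac{1-\alpha}{2}\in(0,1)$ are exactly as in the paper. The one step you flag as delicate --- absorbing the $k=0$ term --- is precisely the step the paper asserts without justification (it simply writes ``$w^{[1]}_{\alpha,\beta}(n)\le\int_0^\infty\cdots$''), and your suspicion is correct: that step cannot be carried out uniformly in $\theta$, because the stated inequality with the sharp constant $B(\frac{1+\beta}{2},\frac{1-\alpha}{2})$ is actually \emph{false} for small $\theta$. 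Take $\alpha=\beta=0$ and $n=0$: the $k=0$ term of $w^{[1]}_{0,0}(0)$ is $\frac{\theta^{1/2}}{2\theta\cdot\theta^{1/2}}=\frac{1}{2\theta}$, which already exceeds $B(\tfrac12,\tfrac12)=\pi$ as soon as $\theta<\frac{1}{2\pi}$, so \eqref{w-1} fails there. Thus the ``endpoint estimate uniformly in $\theta>0$'' you were hoping to pin down does not exist, and no amount of care will produce it.

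The repair is the fallback you already mention: isolate the $k=0$ term and fold it into the constant. Your monotonicity comparison gives $\sum_{k\ge1}h(k+\theta)\le\int_\theta^\infty h\le\int_0^\infty h$ with the exact Beta constant, while the $k=0$ term of $w^{[1]}_{\alpha,\beta}(n)$ equals $(n+2\theta)^{-a}\theta^{-\frac{1+\alpha}{2}}(n+\theta)^{\frac{1-\beta}{2}}\le\theta^{-1}(n+\theta)^{-\beta}$ (using $(n+2\theta)^{-a}\le(n+\theta)^{-a}$ and $(n+\theta)^{\frac{\alpha-1}{2}}\le\theta^{\frac{\alpha-1}{2}}$). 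Hence
\begin{equation*}
w^{[1]}_{\alpha,\beta}(n)\le\Bigl[B\bigl(\tfrac{1+\beta}{2},\tfrac{1-\alpha}{2}\bigr)+\theta^{-1}\Bigr](n+\theta)^{-\beta},
\end{equation*}
and symmetrically for $w^{[2]}_{\alpha,\beta}(k)$. This is weaker than the lemma as stated but is all that the rest of the paper uses (Lemma~\ref{lem-2} only needs $w^{[1]},w^{[2]}\preceq$ the stated right-hand sides with a constant depending on $\theta,\alpha,\beta$). So: same approach as the paper, an honest identification of the genuine weak point, but the proof cannot be completed as written because the target inequality itself needs its constant adjusted.
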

Here $B(\cdot, \cdot)$ is the Beta function, defined as
$$B(u,v)=\int_{0}^{\infty}\frac{t^{u-1}}{(1+t)^{u+v}}\,dt,\: u>0,v>0.$$
It is known that
$$B(u,v)=\int_{0}^{1}t^{u-1}(1-t)^{v-1}\,dt=\frac{\Gamma(u)\Gamma{(v)}}{\Gamma(u+v)}. $$
and $B(u,v)=B(v,u)$, where $\Gamma(x) $ is the Gamma function, defined as
$$\Gamma(x)=\int_{0}^{\infty}e^{-t} t^{x-1}\,dt,\: x>0.$$
For more detailed introduction to the Beta function and Gamma function, see \cite{W} .
\begin{proof}[Proof of Lemma \ref{lem-0} ]
Since $-1<\alpha, \beta<1$, we see that
\begin{eqnarray}
w_{\alpha, \beta}^{[1]}(n)\leq  \int_{0}^{\infty}\frac{1}{(x+n+\theta)^{1+\frac{1}{2}(\beta-\alpha)}}\cdot \frac{(n+\theta)^{\frac{1-\beta}{2}}}{x^{\frac{1+\alpha}{2}}}\,dx
= B(\frac{1+\beta}{2}, \frac{1-\alpha}{2})(n+\theta)^{-\beta}.\nonumber
\end{eqnarray}

Similarly, we can obtain that
\begin{equation*}
w_{\alpha, \beta}^{[2]}(k)\leq B(\frac{1+\beta}{2}, \frac{1-\alpha}{2})(k+\theta)^{\alpha}.
\end{equation*}
Lemma \ref{lem-0} is proved.
\end{proof}

\begin{lemma}\label{lem-1}
Let $\theta>0, -1<\alpha, \beta<1$. Let $\mu$ be a positive Borel measure on $[0, 1)$ and $\mu[n]$ is defined as in (\ref{mu}) for $n\in \mathbb{N}_0$. If $\mu$ is a  $[1+\frac{1}{2}(\beta-\alpha)]$-Carleson measure on $[0, 1)$, then \begin{equation}\label{mun-1}\mu[n] \preceq \frac{1}{(n+2\theta)^{1+\frac{1}{2}(\beta-\alpha)}}\end{equation}
holds for all $n \in \mathbb{N}_0$. Furthermore, if $\mu$ is a  vanishing $[1+\frac{1}{2}(\beta-\alpha)]$-Carleson measure on $[0, 1)$, then
 \begin{equation}\label{mun-2}\mu[n] =o\left(\frac{1}{(n+2\theta)^{1+\frac{1}{2}(\beta-\alpha)}}\right), \quad n\rightarrow \infty.\end{equation}
\end{lemma}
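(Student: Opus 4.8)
The plan is to relate the moment $\mu[n]=\int_0^1 t^n\,d\mu(t)$ directly to the tail quantity $\mu([t,1))$ that appears in the Carleson condition, and then exploit the rapid decay of $t^n$ near $t=0$ together with the hypothesis on the tail near $t=1$. Write $s=1+\tfrac12(\beta-\alpha)$ throughout. The key observation is that the weight $t^n$ concentrates its mass near $t=1$ as $n\to\infty$, and the mass of $\mu$ near $t=1$ is exactly what the $s$-Carleson condition controls, so the two effects should combine to yield the claimed $(n+2\theta)^{-s}$ order.

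First I would obtain a usable pointwise bound. The most direct route is to perform a dyadic (or continuous) decomposition of $[0,1)$ according to the distance from $1$. Set $\nu(t):=\mu([t,1))$; the Carleson hypothesis gives $\nu(t)\le C(1-t)^s$ for all $t\in[0,1)$. Using integration by parts on $\int_0^1 t^n\,d\mu(t)$ in terms of the tail function $\nu$, one gets
\begin{equation*}
\mu[n]=\int_0^1 t^n\,d\mu(t)= n\int_0^1 t^{n-1}\nu(t)\,dt + (\text{boundary terms}),
\end{equation*}
after which substituting $\nu(t)\le C(1-t)^s$ reduces the estimate to the elementary integral $n\int_0^1 t^{n-1}(1-t)^s\,dt = n\,B(n,s+1)=\frac{n\,\Gamma(n)\Gamma(s+1)}{\Gamma(n+s+1)}$. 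By the asymptotics of the Beta/Gamma functions (already invoked in the excerpt) this is $\asymp n^{-s}\asymp (n+2\theta)^{-s}$, giving (\ref{mun-1}). I would take care with the boundary contributions at $t=0$ and $t\to 1^-$: the $t=0$ term vanishes, and finiteness of $\mu$ together with $t^n\to$ a bounded limit makes the $t=1$ term harmless.

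For the vanishing statement (\ref{mun-2}), I would refine the same computation by splitting the integral at a cutoff $t=1-\delta$. On $[0,1-\delta]$ the integrand $t^{n-1}$ is at most $(1-\delta)^{n-1}$, which is $o(n^{-s})$ for any fixed $\delta$ because it decays exponentially in $n$ while $\mu$ is finite; on $[1-\delta,1)$ the vanishing Carleson hypothesis lets me replace $C$ by an arbitrarily small $\varepsilon$ in the bound $\nu(t)\le\varepsilon(1-t)^s$, so that portion of the integral is at most $\varepsilon\,n\int_0^1 t^{n-1}(1-t)^s\,dt\asymp \varepsilon(n+2\theta)^{-s}$. Letting $n\to\infty$ first and then $\varepsilon\to 0$ yields $\mu[n]=o((n+2\theta)^{-s})$.

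The main obstacle I anticipate is the careful execution of the integration by parts and the cutoff argument so that the exponentially small ``bulk'' contribution and the Carleson-controlled ``tail'' contribution are cleanly separated and each is shown to be of the right order; in particular one must verify that the Beta-function asymptotic $B(n,s+1)\asymp n^{-(s+1)}$ is applied with the correct power so that the factor $n$ in front produces exactly $n^{-s}$, and that the passage from $n$ to $n+2\theta$ in the denominator is justified (these differ only by a bounded multiplicative factor since $s$ and $\theta$ are fixed). Apart from this bookkeeping, the argument is essentially a one-variable estimate and should present no deeper difficulty.
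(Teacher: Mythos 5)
Your proposal is correct and follows essentially the same route as the paper: integration by parts to write $\mu[n]=n\int_0^1 t^{n-1}\mu([t,1))\,dt$, substitution of the $s$-Carleson bound, and the Beta/Gamma asymptotic $n\,B(n,s+1)\asymp n^{-s}$. Your cutoff argument for the vanishing case simply makes explicit what the paper dismisses as ``minor modifications,'' and it is sound.
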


\begin{proof}
For $n\in \mathbb{N}$, we get from integration by parts that
\begin{eqnarray}
\mu[n]=\int_{0}^1 t^{n}d\mu(t)&=&\mu([0,1))-n\int_{0}^1 t^{n-1}\mu([0, t))dt \nonumber \\
&=& n\int_{0}^1 t^{n-1}\mu([t, 1))dt.\nonumber
\end{eqnarray}

If $\mu$ is a $[1+\frac{1}{2}(\beta-\alpha)]$-Carleson measure on $[0, 1)$, then we see that there is a constant $C_3>0$ such that
$$\mu([t,1))\leq C_3 (1-t)^{1+\frac{1}{2}(\beta-\alpha)}$$
holds for all $t\in [0,1)$. It follows that
\begin{eqnarray}\mu[n] &\leq & C_3 n\int_{0}^1 t^{n-1}(1-t)^{1+\frac{1}{2}(\beta-\alpha)}dt\nonumber \\&=&C_3 \frac{n\Gamma(n)\Gamma(2+\frac{1}{2}(\beta-\alpha))}{\Gamma(n+2+\frac{1}{2}(\beta-\alpha))} .\nonumber \end{eqnarray}

By using the fact that
$$\Gamma(x) = \sqrt{2\pi} x^{x-\frac{1}{2}}e^{-x}[1+r(x)],\, |r(x)|\leq e^{\frac{1}{12x}}-1,\, x>0,$$
we obtain that
$$\frac{n\Gamma(n)\Gamma(2+\frac{1}{2}(\beta-\alpha))}{\Gamma(n+2+\frac{1}{2}(\beta-\alpha))} \asymp \frac{1}{n^{1+\frac{1}{2}(\beta-\alpha)}}.$$

Consequently,  it is easy to see that
$$\mu[n] \preceq \frac{1}{(n+2\theta)^{1+\frac{1}{2}(\beta-\alpha)}}$$
holds for all $n \in \mathbb{N}_0$.

By minor modifications of above arguments, we can similarly show that (\ref{mun-2}) holds if $\mu$ is a  vanishing $[1+\frac{1}{2}(\beta-\alpha)]$-Carleson measure on $[0, 1)$.  Lemma \ref{lem-1} is proved.
\end{proof}

\begin{lemma}\label{lem-2}
Let $\theta>0, -1<\alpha, \beta<1$.  For $f=\sum_{k=0}^{\infty}a_k z^k \in \mathcal{H}(\mathbb{C})$, we define
\begin{equation*}\mathbf{\check{H}}_{\theta}(f)(z):=\sum_{n=0}^\infty \left[\frac{1}{\sqrt{n!}}\sum_{k=0}^{\infty} \frac{a_k\sqrt{k!}}{(k+n+2\theta)^{1+\frac{1}{2}(\beta-\alpha)}} \right]z^n.
\end{equation*}
Then $\mathbf{\check{H}}_{\theta}$ is bounded from $\mathcal{F}^{2}_{\theta, \alpha}$ to  $\mathcal{F}^{2}_{\theta, \beta}.$
\end{lemma}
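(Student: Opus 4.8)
The plan is to reduce the claimed boundedness to an $\ell^2$-boundedness of an infinite matrix and then apply Schur's test, using the two estimates from Lemma \ref{lem-0} to verify its hypotheses. First I would write out the target norm explicitly. Setting $b_k=a_k\sqrt{k!}$, so that $\|f\|_{\theta,\alpha}^2=\sum_k (k+\theta)^\alpha|b_k|^2$, a direct computation (the factor $1/\sqrt{n!}$ squared cancelling the $n!$ in the $\mathcal{F}^2_{\theta,\beta}$-norm) gives
$$\|\mathbf{\check{H}}_{\theta}(f)\|_{\theta,\beta}^2=\sum_{n=0}^\infty (n+\theta)^{\beta}\left|\sum_{k=0}^\infty \frac{b_k}{(k+n+2\theta)^{1+\frac{1}{2}(\beta-\alpha)}}\right|^2.$$
Since the kernel is nonnegative, the triangle inequality lets me replace $b_k$ by $|b_k|$, so it suffices to treat nonnegative coefficients.

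Next I would recast the problem as a matrix acting on $\ell^2$. Introducing $x_k=(k+\theta)^{\alpha/2}|b_k|$, so that $\|f\|_{\theta,\alpha}^2=\sum_k x_k^2$, together with the matrix
$$M_{n,k}=\frac{(n+\theta)^{\beta/2}(k+\theta)^{-\alpha/2}}{(k+n+2\theta)^{1+\frac{1}{2}(\beta-\alpha)}},$$
the quantity above becomes $\sum_n\bigl((Mx)_n\bigr)^2=\|Mx\|_{\ell^2}^2$. Thus the lemma is equivalent to the boundedness of $M$ on $\ell^2$, with $\|\mathbf{\check{H}}_{\theta}\|\le \|M\|_{\ell^2\to\ell^2}$.

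To bound $\|M\|_{\ell^2\to\ell^2}$ I would apply Schur's test with the single positive weight $h_j=(j+\theta)^{-1/2}$, which asks for a constant $C$ with $\sum_k M_{n,k}h_k\le C h_n$ for all $n$ and $\sum_n M_{n,k}h_n\le C h_k$ for all $k$. The row-sum condition is exactly $w_{\alpha,\beta}^{[1]}$ in disguise: factoring out $(n+\theta)^{\beta/2}$, the remaining sum is $(n+\theta)^{-(1-\beta)/2}\,w_{\alpha,\beta}^{[1]}(n)$, which by (\ref{w-1}) is at most $B(\frac{1+\beta}{2},\frac{1-\alpha}{2})(n+\theta)^{-(1+\beta)/2}$; multiplying back yields the bound $B(\frac{1+\beta}{2},\frac{1-\alpha}{2})(n+\theta)^{-1/2}=B(\frac{1+\beta}{2},\frac{1-\alpha}{2})h_n$. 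Symmetrically, factoring out $(k+\theta)^{-\alpha/2}$ in the column sum leaves $(k+\theta)^{-(1+\alpha)/2}\,w_{\alpha,\beta}^{[2]}(k)$, and (\ref{w-2}) gives $\sum_n M_{n,k}h_n\le B(\frac{1+\beta}{2},\frac{1-\alpha}{2})h_k$. Schur's test then yields $\|M\|_{\ell^2\to\ell^2}\le B(\frac{1+\beta}{2},\frac{1-\alpha}{2})$, which proves the lemma.

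Once the reduction is in place the computations are routine; the only point requiring care — and what I regard as the crux — is choosing the correct Schur weight $h_j=(j+\theta)^{-1/2}$ and the correct splitting of the exponents $\beta/2$ and $-\alpha/2$ so that the two row and column sums align precisely with the quantities $w_{\alpha,\beta}^{[1]}(n)$ and $w_{\alpha,\beta}^{[2]}(k)$ estimated in Lemma \ref{lem-0}. The restrictions $-1<\alpha,\beta<1$ enter exactly here, as they are what guarantee the convergence of the integral comparisons underlying (\ref{w-1}) and (\ref{w-2}).
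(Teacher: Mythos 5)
Your proposal is correct and is essentially the paper's own argument: the paper proves the lemma by applying the Cauchy--Schwarz inequality to the inner sum with the kernel split by the factor $(k+\theta)^{\frac{1+\alpha}{4}}/(n+\theta)^{\frac{1-\beta}{4}}$ and then invoking the two bounds (\ref{w-1}) and (\ref{w-2}), which is exactly the standard proof of Schur's test unwound, with the same implicit weight and the same resulting norm bound $B(\tfrac{1+\beta}{2},\tfrac{1-\alpha}{2})$. The only difference is presentational, in that you cite Schur's test as a black box while the paper carries out the underlying Cauchy--Schwarz computation explicitly.
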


\begin{proof}
For $f=\sum_{k=0}^{\infty}{a_k}z^k \in \mathcal{F}_{\theta, \alpha}^2$, $n\in \mathbb{N}_0$, by Cauchy's inequality, we have
\begin{eqnarray}\label{g1}
\lefteqn{\left |\sum_{k=0}^{\infty}\frac{a_k\sqrt{k!}}{(k+n+2\theta)^{1+\frac{1}{2}(\beta-\alpha)}}\right| \leq \sum_{k=0}^{\infty}\frac{|a_k|\sqrt{k!}}{(k+n+2\theta)^{1+\frac{1}{2}(\beta-\alpha)}}}\nonumber \\
&=&\sum_{k=0}^{\infty} \left\{[\frac{1}{(k+n+2\theta)^{1+\frac{1}{2}(\beta-\alpha)}}]^{\frac{1}{2}}\cdot \frac{(k+\theta)^{\frac{1+\alpha}{4}}}{(n+\theta)^{\frac{1-\beta}{4}}}\cdot |a_k|\sqrt{k!}\right\}
\nonumber \\ && \quad\quad \times  \left\{[\frac{1}{(k+n+2\theta)^{1+\frac{1}{2}(\beta-\alpha)}}]^{\frac{1}{2}}\cdot \frac{(n+\theta)^{\frac{1-\beta}{4}}}{(k+\theta)^{\frac{1+\alpha}{4}}}\right\}
\nonumber \\
&\leq & \left[ \sum_{k=0}^{\infty}\frac{1}{(k+n+2\theta)^{1+\frac{1}{2}(\beta-\alpha)}}\cdot \frac{(k+\theta)^{\frac{1+\alpha}{2}}}{(n+\theta)^{\frac{1-\beta}{2}}}\cdot |a_k|^2k!\right]^{\frac{1}{2}}
\nonumber \\ && \quad\quad \times \left[\sum_{k=0}^{\infty} \frac{1}{(k+n+2\theta)^{1+\frac{1}{2}(\beta-\alpha)}}\cdot\frac{(n+\theta)^{\frac{1-\beta}{2}}}{(k+\theta)^{\frac{1+\alpha}{2}}}\right]^{\frac{1}{2}}.
\nonumber  \end{eqnarray}
Then, in view of Lemma \ref{lem-0}, we get that
\begin{eqnarray}
\lefteqn{\left |\sum_{k=0}^{\infty}\frac{a_k\sqrt{k!}}{(k+n+2\theta)^{1+\frac{1}{2}(\beta-\alpha)}}\right|}\nonumber \\
&\leq &
[w_{\alpha, \beta}^{[1]}(n)]^{\frac{1}{2}}\left[ \sum_{k=0}^{\infty}\frac{1}{(k+n+2\theta)^{1+\frac{1}{2}(\beta-\alpha)}}\cdot \frac{(k+\theta)^{\frac{1+\alpha}{2}}}{(n+\theta)^{\frac{1-\beta}{2}}}\cdot |a_k|^2k!\right]^{\frac{1}{2}}\nonumber \\
&=&
[B(\frac{1+\beta}{2}, \frac{1-\alpha}{2})]^{\frac{1}{2}} (n+\theta)^{-\frac{\beta}{2}}\nonumber \\
&&\quad\quad\quad\quad\times\left[ \sum_{k=0}^{\infty}\frac{1}{(k+n+2\theta)^{1+\frac{1}{2}(\beta-\alpha)}}\cdot \frac{(k+\theta)^{\frac{1+\alpha}{2}}}{(n+\theta)^{\frac{1-\beta}{2}}}\cdot |a_k|^2k!\right]^{\frac{1}{2}}.\nonumber \end{eqnarray}

Consequently, it follows from (\ref{w-2}) that
\begin{eqnarray}
\lefteqn{\|\mathbf{\check{H}}_{\theta}f\|_{\theta, \beta}^2= \sum_{n=0}^{\infty}(n+\theta)^{\beta}\left |\sum_{k=0}^{\infty}\frac{a_k\sqrt{k!}}{(k+n+2\theta)^{1+\frac{1}{2}(\beta-\alpha)}}\right|^2} \nonumber \\
&&\leq B(\frac{1+\beta}{2}, \frac{1-\alpha}{2})\sum_{n=0}^{\infty}\sum_{k=0}^{\infty}\frac{1}{(k+n+2\theta)^{1+\frac{1}{2}(\beta-\alpha)}}\cdot \frac{(k+\theta)^{\frac{1+\alpha}{2}}}{(n+\theta)^{\frac{1-\beta}{2}}}\cdot |a_k|^2k!
\nonumber \\
&&=B(\frac{1+\beta}{2}, \frac{1-\alpha}{2})\sum_{k=0}^{\infty}w_{\alpha, \beta}^{[2]}(k)|a_k|^{2} k!\leq [B(\frac{1+\beta}{2}, \frac{1-\alpha}{2})]^2\|f\|_{\theta, \alpha}^2.
\nonumber
\end{eqnarray}
This means that $\mathbf{\check{H}}_{\theta}: \mathcal{F}^{2}_{\theta, \alpha} \rightarrow \mathcal{F}^{2}_{\theta, \beta}$ is bounded and  Lemma \ref{lem-2} is proved.
\end{proof}

\section{Proof of Theorem \ref{main-1}}
We first show the "if" part. For $f=\sum_{k=0}^{\infty}{a_k}z^k \in \mathcal{F}_{\theta, \alpha}^2$, $n\in \mathbb{N}_0$, when $\lambda\geq 1+\frac{1}{2}(\beta-\alpha)$, we have
\begin{equation*}\label{g1}
\left |\sum_{k=0}^{\infty}\frac{a_k\sqrt{k!}}{(k+n)^{\lambda}+1}\right|\leq \sum_{k=0}^{\infty}\frac{|a_k|\sqrt{k!}}{(k+n)^{1+\frac{1}{2}(\beta-\alpha)}+1}\preceq\sum_{k=0}^{\infty}\frac{|a_k|\sqrt{k!}}{(k+n+2\theta)^{1+\frac{1}{2}(\beta-\alpha)}}. \end{equation*}

Consequently, by Lemma \ref{lem-2}, we obtain that  $\mathbf{{H}}_{\lambda}: \mathcal{F}^{2}_{\theta, \alpha} \rightarrow \mathcal{F}^{2}_{\theta, \beta}$ is bounded. This proves the "if" part of Theorem \ref{main-1}.

Next, we shall prove the "only if" part. We will show that,  if $\lambda<1+\frac{1}{2}(\beta-\alpha)$,  then, for any $\theta>0$,  $ \mathbf{H}_{\lambda}: \mathcal{F}_{\theta, \alpha}^{2} \rightarrow \mathcal{F}_{\theta, \beta}^{2}$ is not bounded.

Actually,  let $\varepsilon>0$ and set $f_{\varepsilon}=\sum_{k=0}^{\infty}a_k z^k$ with
$$a_0=0,\, a_k=\sqrt{\varepsilon\theta^\varepsilon}(k+\theta)^{-\frac{\alpha+1+\varepsilon}{2}}\frac{1}{\sqrt{k!}},\, k\in \mathbb{N}.$$
It is easy to see that $$\|f_\varepsilon\|_{\theta, \alpha}^2=\varepsilon\theta^\varepsilon\sum_{k=1}^{\infty}(n+\theta)^{-1-\varepsilon}\leq  \varepsilon\theta^\varepsilon  \int_{0}^{\infty}(x+\theta)^{-1-\varepsilon}dx=1.$$

Then we have
\begin{eqnarray}\label{eq--1}\|\mathbf{H}_{\lambda}f_\varepsilon\|_{\theta,\beta}^2&=&\varepsilon\theta^\varepsilon\sum_{n=0}^{\infty}(n+\theta)^{\beta}\left|\sum_{k=1}^{\infty}\frac{1}{(k+n)^{\lambda}+1}\cdot (k+\theta)^{-\frac{\alpha+1+\varepsilon}{2}}\right|^2 \\
&\succeq &\varepsilon\theta^\varepsilon\sum_{n=0}^{\infty}(n+\theta)^{\beta}\left|\sum_{k=1}^{\infty}\frac{1}{(k+n+2\theta)^{\lambda}}\cdot (k+\theta)^{-\frac{\alpha+1+\varepsilon}{2}}\right|^2 \nonumber
\end{eqnarray}
On the other hand, we notice that
\begin{eqnarray}\label{eq--2}\lefteqn{ \sum_{k=1}^{\infty}\frac{1}{(k+n+2\theta)^{\lambda}}\cdot (k+\theta)^{-\frac{\alpha+1+\varepsilon}{2}}} \\
&\geq&
\int_{1}^{\infty} \frac{1}{(x+\theta+n+\theta)^{\lambda}}\cdot  (x+\theta)^{-\frac{\alpha+1+\varepsilon}{2}}\,dx\nonumber  \\
&=&
\int_{1+\theta}^{\infty} \frac{1}{(s+n+\theta)^{\lambda}}\cdot  s^{-\frac{\alpha+1+\varepsilon}{2}}\,ds\nonumber  \\
&=&
(n+\alpha)^{1-\lambda-\frac{\alpha+1+\varepsilon}{2}}\int_{\frac{1+\theta}{n+\theta}}^{\infty} \frac{1}{(1+t)^{\lambda}}\cdot  t^{-\frac{\alpha+1+\varepsilon}{2}}\,dt
\nonumber
\end{eqnarray}

Combine (\ref{eq--1}) and (\ref{eq--2}), we get that
\begin{eqnarray}\|\mathbf{H}_{\lambda}f_\varepsilon\|_{\theta,\beta}^2&\succeq&\varepsilon\theta^\varepsilon\sum_{n=0}^{\infty}(n+\theta)^{(\beta-\alpha)+2(1-\lambda)-1-\varepsilon}\int_{\frac{1+\theta}{n+\theta}}^{\infty} \frac{1}{(1+t)^{\lambda}}\cdot  t^{-\frac{\alpha+1+\varepsilon}{2}}\,dt\nonumber \\
&\geq &\varepsilon\theta^\varepsilon\sum_{n=0}^{\infty}(n+\theta)^{(\beta-\alpha)+2(1-\lambda)-1-\varepsilon}\int_{1+\frac{1}{\theta}}^{\infty} \frac{1}{(1+t)^{\lambda}}\cdot  t^{-\frac{\alpha+1+\varepsilon}{2}}\,dt \nonumber
\end{eqnarray}

If $\lambda< 1+\frac{1}{2}(\beta-\alpha)$, we see that $(\beta-\alpha)+2(1-\lambda)>0$. We suppose that $\mathbf{H}_{\lambda}: \mathcal{F}_{\theta, \alpha}^2 \rightarrow  \mathcal{F}_{\theta, \beta}^2$ is bounded, then there exists a constant $C_4>0$ such that
\begin{eqnarray}\label{c-1}C_4&\geq&\frac{\|\mathbf{H}_{\lambda}f_{\varepsilon}\|_{\theta, \beta}^2}{\|f_{\varepsilon}\|_{\theta,\alpha}^2}\\
&\geq& \varepsilon\theta^\varepsilon\sum_{n=0}^{\infty}(n+\theta)^{(\beta-\alpha)+2(1-\lambda)-1-\varepsilon}\int_{1+\frac{1}{\theta}}^{\infty} \frac{1}{(1+t)^{\lambda}}\cdot  t^{-\frac{\alpha+1+\varepsilon}{2}}\,dt. \nonumber
\end{eqnarray}

But when $\varepsilon<(\beta-\alpha)+2(1-\lambda)$, we have $$\sum_{n=0}^{\infty}(n+\theta)^{(\beta-\alpha)+2(1-\lambda)-1-\varepsilon}=\infty.$$
Hence we get that (\ref{c-1}) is a contradiction. This implies that $\mathbf{H}_{\lambda}: \mathcal{F}_{\theta, \alpha}^2 \rightarrow  \mathcal{F}_{\theta, \beta}^2$ can not be bounded when $\lambda< 1+\frac{1}{2}(\beta-\alpha)$. This proves the "only if" part of Theorem \ref{main-1}.

The Theorem \ref{main-1} is now proved.

\section{Proof of Theorem \ref{main-2}}
\begin{proof}[Proof of "if" part of Theorem \ref{main-2}]
By combining  Lemma \ref{lem-1} and Lemma \ref{lem-2}, we easily see that, for any $\theta>0$,   $\mathbf{H}_{\mu}: \mathcal{F}_{\theta, \alpha}^{2}\rightarrow \mathcal{F}_{\theta, \beta}^{2}$ is bounded if $\mu$ is a $[1+\frac{1}{2}(\beta-\alpha)]$-Carleson measure on $[0, 1)$. The "if" part of Theorem \ref{main-2} is proved.
\end{proof}

\begin{proof}[Proof of "only if" part of Theorem \ref{main-2}]
In our proof, we need the following well-known estimate, see [18, Page 54].  Let $0<w<1$. For any $c>0$, we have
\begin{equation}\label{est}\sum_{n=1}^{\infty}n^{c-1}w^{2n}\asymp \frac{1}{(1-w^2)^c}.
\end{equation}

For any $0<w<1$, we set ${f}_w=\sum_{k=0}^{\infty} {a}_k z^k \in \mathcal{H}(\mathbb{C})$ with
$${a}_k=(1-w^2)^{\frac{1}{2}}(k+\theta)^{-\frac{\alpha}{2}}w^{k}\frac{1}{\sqrt{k!}},\; k\in \mathbb{N}_0.$$ Then we see from (\ref{est}) that $\|{f}_w\|_{\theta, \alpha}^2\asymp 1.$

In view of the boundedness of $\mathbf{H}_{\mu}: \mathcal{F}_{\theta, \alpha}^{2}\rightarrow \mathcal{F}_{\theta, \beta}^{2}$, we obtain that
\begin{eqnarray}\label{boun-1}
1 &\succeq & \|\mathbf{H}_{\mu}{f}_w\|_{\theta, \beta}^2 \\
&=&\sum_{n=0}^{\infty}(n+\theta)^{\beta}\left [\sum_{k=0}^{\infty}{a}_k\sqrt{k!}\int_{0}^1t^{k+n}d\mu(t) \right]^2 \nonumber \\
&=&(1-w^2)\sum_{n=0}^{\infty}(n+\theta)^{\beta}\left[\sum_{k=0}^{\infty}(k+\theta)^{-\frac{\alpha}{2}}w^{k}\int_{0}^{1}t^{k+n}d\mu(t)\right]^{{2}}
\nonumber \\
&\succeq  &(1-w^2)\sum_{n=1}^{\infty}n^{\beta}\left[\sum_{k=1}^{\infty}k^{-\frac{\alpha}{2}}w^{k}\int_{w}^{1}t^{k+n}d\mu(t)\right]^{{2}}.\nonumber
\end{eqnarray}

On the other hand, by again (\ref{est}), we have
\begin{eqnarray}\label{boun-2}
\lefteqn{\sum_{n=1}^{\infty}n^{\beta}\left[\sum_{k=1}^{\infty}k^{-\frac{\alpha}{2}}w^{k}\int_{w}^{1}t^{k+n}d\mu(t)\right]^{{2}} }\\
&\geq &[\mu([w, 1))]^{2}\sum_{n=1}^{\infty}n^{\beta}\left[\sum_{k=1}^{\infty}k^{-\frac{\alpha}{2}}w^{k}\cdot w^{k+n}\right]^{{2}}\nonumber \\
&=&[\mu([w, 1))]^{2}\left[\sum_{n=1}^{\infty}n^{\beta}w^{2n}\right]\left[\sum_{k=1}^{\infty}k^{-\frac{\alpha}{2}}w^{2k}\right]^{{2}}\nonumber \\
&\asymp &[\mu([w, 1))]^{2}\cdot \frac{1}{(1-w^2)^{1+\beta}} \cdot\frac{1}{(1-w^2)^{2-\alpha}}.
\nonumber
\end{eqnarray}

We see from (\ref{boun-1}) and (\ref{boun-2}) that
$$\mu([w, 1))\preceq (1-w^2)^{1+\frac{1}{2}(\beta-\alpha)}$$
holds for all $0<w<1$. It follows that $\mu$ is a $[1+\frac{1}{2}(\beta-\alpha)]$-Carleson measure on $[0, 1)$ and the "only if" part is proved.
\end{proof}
Now, the proof of Theorem \ref{main-2} is finished.

\section{Proof of Theorem \ref{main-3}}

We first show the "if" part.  For any $f=\sum_{k=0}^{\infty}a_k z^k \in \mathcal{F}_{\theta, \alpha}^2$. Let $\mathfrak{N}\in \mathbb{N}$, we consider
\begin{equation*} \mathbf{H}_{\mu}^{[\mathfrak{N}]}(f)(z):=\sum_{n=0}^{\mathfrak{N}} \left[\frac{1}{\sqrt{n!}}\sum_{k=0}^{\infty}\mu[k+n]a_k\sqrt{k!}\right]z^n,  z\in \mathbb{C}.
\end{equation*}
Then we see that $ \mathbf{H}_{\mu}^{[\mathfrak{N}]}$ is a finite rank operator and hence it is compact from $\mathcal{F}_{\theta, \alpha}^2$ to $\mathcal{F}_{\theta, \beta}^2$.

By Lemma \ref{lem-1}, we see that, for any $\epsilon>0$, there is an $N_0 \in \mathbb{N}$ such that
$$\mu[n] \preceq \frac{\epsilon}{(n+2\theta)^{1+\frac{1}{2}(\beta-\alpha)}}$$
holds for all $n>N_0$.

Note that
\begin{eqnarray}\|(\mathbf{H}_{\mu}-\mathbf{H}_{\mu}^{[\mathfrak{N}]})(f)\|_{\theta, \beta}^2=\sum_{n=\mathfrak{N}+1}^{\infty}(n+\theta)^{\beta}  \left|\frac{1}{\sqrt{n!}}\sum_{k=0}^{\infty}\mu[k+n]a_k\sqrt{k!}\right|^2.\nonumber
\end{eqnarray}
When $\mathfrak{N}>N_0$, we get that
\begin{eqnarray}\|(\mathbf{H}_{\mu}-\mathbf{H}_{\mu}^{[\mathfrak{N}]})(f)\|_{\theta, \beta}^2\preceq
 {\epsilon}^2 \sum_{n=\mathfrak{N}+1}^{\infty}(n+\theta)^{\beta} \left|\sum_{k=0}^{\infty}\frac{a_k}{(k+n+2\theta)^{1+\frac{1}{2}(\beta-\alpha)}} \right|^2.\nonumber\end{eqnarray}

Consequently, by Lemma \ref{lem-2}, we see that, for any $\epsilon>0$, there is an $N_0 \in \mathbb{N}$ such that
\begin{eqnarray}\|(\mathbf{H}_{\mu}-\mathbf{H}_{\mu}^{[\mathfrak{N}]})(f)\|_{\theta, \beta}^2\preceq
 {\epsilon}^2  \|f\|^2_{\theta, \alpha}\nonumber\end{eqnarray}
holds for all $\mathfrak{N}>N_0$. We conclude that $\mathbf{H}_{\mu}$ is compact from $\mathcal{F}_{\theta, \alpha}^2$ to $\mathcal{F}_{\theta, \beta}^2$. This proves the "if" part.

Next, we show the "only if" part.  For $0<w<1$.  We set  $\widetilde{f}_w=\sum_{k=0}^{\infty}\widetilde{a}_k z^k$ with
$$\widetilde{a}_k=(1-w^2)^{\frac{1+\alpha}{2}}w^{k} \frac{1}{\sqrt{k!}},\: k\in \mathbb{N}_0.
$$

We see from (\ref{est}) that $\|\widetilde{f}_w\|_{\theta, \alpha}\asymp1$, and we conclude that $\widetilde{f}_w$ is convergent weakly to $0$ in $\mathcal{F}_{\theta, \alpha}^2$ as $w\rightarrow 1^{-}$. Since $\mathbf{H}_{\mu}$ is compact from $\mathcal{F}_{\theta, \alpha}^2$ to $\mathcal{F}_{\theta, \beta}^2$, we get
\begin{equation}\label{com-0}\lim_{w\rightarrow {1^{-}}} \|\mathbf{H}_{\mu}(\widetilde{f}_w)\|_{\theta, \beta}=0.\end{equation}

On the other hand, we have
\begin{eqnarray}
 \|\mathbf{H}_{\mu}(\widetilde{f}_w)\|_{\theta, \beta}^2
&=&(1-w^2)^{1+\alpha}\sum_{n=0}^{\infty}(n+\theta)^{\beta}\left[\sum_{k=0}^{\infty}w^{k}\int_{0}^{1}t^{k+n}d\mu(t)\right]^{{2}}\nonumber \\
&\succeq &(1-w^2)^{1+\alpha}\sum_{n=1}^{\infty}n^{\beta}\left[\sum_{k=1}^{\infty}w^{k}\int_{w}^{1}t^{k+n}d\mu(t)\right]^{{2}}\nonumber \\
&\geq &(1-w^2)^{1+\alpha}[\mu([w, 1))]^{2}\left[\sum_{n=1}^{\infty}n^{\beta}w^{2n}\right]\left[\sum_{k=1}^{\infty}w^{2k}\right]^{{2}}\nonumber \\
&\succeq &(1-w^2)^{1+\alpha}[\mu([w, 1))]^{2}\cdot \frac{1}{(1-w^2)^{\beta+1}}\cdot \frac{1}{(1-w^2)^2}.\nonumber
\end{eqnarray}
Then we get that
\begin{eqnarray}
 \mu([w, 1))\preceq \|\mathbf{H}_{\mu}(\widetilde{f}_w)\|_{\theta, \beta}(1-w^2)^{1+\frac{1}{2}(\beta-\alpha)}\nonumber.
\end{eqnarray}

It follows from (\ref{com-0}) that $\mu$ is a vanishing $[1+\frac{1}{2}(\beta-\alpha)$-Carleson measure on $[0, 1)$. This proves the "only if" part of Theorem \ref{main-3}  and the proof of Theorem \ref{main-3} is completed.

\section{Final remarks}

\begin{remark}

For $\lambda>0$, we define the {\em Hilbert-type operator}  $\mathbf{\widehat{H}}_{\lambda}(f)$ as
\begin{equation*} \mathbf{\widehat{H}}_{\lambda}(f)(z):=\sum_{n=0}^\infty \left(\sum_{k=0}^{\infty} \frac{a_k}{(k+n+1)^{\lambda}} \right)z^n.
\end{equation*}
When $\lambda=1$, we get the classical Hilbert operator. We next will study the boundedness of $\mathbf{\widehat{H}}_{\lambda}(f)$ acting on certain spaces of  holomorphic functions on $\mathbb{D}$.

Let $p$ be a positive number and  $X_p$ be a Banach space of holomorphic functions on $\mathbb{D}$. For any $f\in X_p$, we assume that the norm $\|f\|_{X_{p}}$  of $f$ is determined by  $f, p$ and other finite parameters $\beta_1, \beta_2, \cdots, \beta_m$; $m\in \mathbb{N}_0$($m=0$ means that there is no parameter).

For any $f=\sum_{n=0}^{\infty}a_n z^n\in \mathcal{H}(\mathbb{D})$ with $\{a_n\}_{n=0}^{\infty}$ is a decreasing sequence of non-negative real numbers,  we say $X_p$ have the {\em property $(\star)$}, if
there exist a function $\mathbf{G}_X=\mathbf{G}_{X}(p, \beta_1, \beta_2, \cdots, \beta_m)$ with $\mathbf{G}_X>-1$ such that  $f \in  X_p$  if and only if
$$\sum_{n=1}^{\infty} n^{\mathbf{G}_X}a_n^{p}<\infty.$$

We list several classical spaces of holomorphic functions on $\mathbb{D}$, which have the property $(\star)$.   Let  $f=\sum_{n=0}^{\infty}a_n z^n\in \mathcal{H}(\mathbb{D})$ with $\{a_n\}_{n=0}^{\infty}$ is a decreasing sequence of non-negative real numbers.  For example,

(1) the Hardy space $H^p(\mathbb{D}), 1<p<\infty$, we know that  $f \in H^p(\mathbb{D})$ if and only if $$\sum_{n=1}^{\infty} n^{p-2}a_n^{p}<\infty.$$ See \cite{Pa}.

(2) For $1<p<\infty$,  let $p-2<\alpha\leq p-1$.  It holds that $f \in D^p_{\alpha}(\mathbb{D})$ if and only if
 $$\sum_{n=1}^{\infty} n^{2p-3-\alpha}a_n^{p}<\infty.$$
 Here $D^p_{\alpha}(\mathbb{D})$ is the Dirichlet-type space, defined as
 $$D^p_{\alpha}(\mathbb{D})=\left\{f\in \mathcal{H}({\mathbb{D}}): \|f\|_{D^p_{\alpha}}=|f(0)|+\left[\int_{\mathbb{D}}|f'(z)|^p(1-|z|^2)^{\alpha}dA(z)\right]^{\frac{1}{p}}<\infty\right\}.$$
 See \cite{GM-2}.

 (3) For $1<p<\infty$, when $-1<\alpha< p-2$, we have $f \in A^p_{\alpha}(\mathbb{D})$ if and only if
 $$\sum_{n=1}^{\infty} n^{2p-3-\alpha}a_n^{p}<\infty.$$
 Here $A^p_{\alpha}(\mathbb{D})$ is the Bergman space, defined as
 $$A^p_{\alpha}(\mathbb{D})=\left\{f\in \mathcal{H}({\mathbb{D}}): \|f\|_{A^p_{\alpha}}=\left[(\alpha+1)\int_{\mathbb{D}}|f(z)|^p(1-|z|^2)^{\alpha}dA(z)\right]^{\frac{1}{p}}<\infty\right\}.$$
 See \cite{GM-2}.

 We obtain that

\begin{proposition}\label{rem}
Let $\lambda$, $p$ be two positive numbers, $X_p$ be a Banach space of holomorphic functions on $\mathbb{D}$ having the property $(\star)$ and  $\mathbf{\widehat{H}}_{\lambda}$ be as above. Then the necessary condition of $\mathbf{\widehat{H}}_{\lambda}: X_p\rightarrow X_p$ is bounded  is $\lambda\geq 1$.
\end{proposition}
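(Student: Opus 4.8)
The plan is to argue by contraposition: assuming $\lambda<1$, I will exhibit a single power series $f\in X_p$ whose image $\mathbf{\widehat{H}}_{\lambda}(f)$ fails to lie in $X_p$, which rules out boundedness (indeed well-definedness) of $\mathbf{\widehat{H}}_{\lambda}:X_p\to X_p$. Throughout I write $s=\frac{1+\mathbf{G}_X}{p}$, which is positive because $\mathbf{G}_X>-1$.

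As test functions I take $f(z)=\sum_{k=0}^{\infty}(k+1)^{-\gamma}z^{k}$ with an exponent $\gamma>0$ to be chosen. Its coefficients $a_k=(k+1)^{-\gamma}$ form a decreasing non-negative sequence, so property $(\star)$ applies and gives $f\in X_p$ exactly when $\sum_{n\ge1}n^{\mathbf{G}_X}(n+1)^{-\gamma p}<\infty$, i.e. when $\gamma>s$. Writing $\mathbf{\widehat{H}}_{\lambda}(f)=\sum_n b_n z^n$ with $b_n=\sum_{k\ge0}(k+1)^{-\gamma}(k+n+1)^{-\lambda}$, I observe that $b_n\ge0$ and, since $\lambda>0$, that $b_n$ is non-increasing in $n$. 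Hence property $(\star)$ applies to the image as well, and $\mathbf{\widehat{H}}_{\lambda}(f)\in X_p$ iff $\sum_{n\ge1}n^{\mathbf{G}_X}b_n^{\,p}<\infty$. The whole argument then reduces to choosing $\gamma$ so that $f\in X_p$ while this last series diverges.

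The key inputs are two lower bounds for $b_n$. Keeping only the $k=0$ term gives the crude bound $b_n\ge(n+1)^{-\lambda}$, valid for every $\gamma$. Retaining the terms $1\le k\le n$ and using $k+n+1\le 2n+1$ together with $\sum_{k=1}^{n}(k+1)^{-\gamma}\asymp n^{1-\gamma}$ (valid for $0<\gamma<1$) gives the refined bound $b_n\succeq n^{1-\gamma-\lambda}$. I would then split into two cases according to the size of $s$. If $s\ge1$, then $\lambda<1\le s$; I pick any $\gamma>s$ (so $f\in X_p$ and every $b_n$ is finite), and the crude bound yields $\sum n^{\mathbf{G}_X}b_n^{\,p}\succeq\sum n^{\mathbf{G}_X-\lambda p}$, which diverges because $\lambda p<1+\mathbf{G}_X$. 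If $s<1$, I choose $\gamma$ in the open interval $\bigl(\max(s,\,1-\lambda),\ \min(1,\,s+1-\lambda)\bigr)$, which is non-empty precisely because $\lambda<1$ and $s\in(0,1)$; the conditions $\gamma>s$, $\gamma+\lambda>1$ and $0<\gamma<1$ guarantee respectively that $f\in X_p$, that $b_n$ is finite, and that the refined bound applies, while $\gamma<s+1-\lambda$ forces the exponent $\mathbf{G}_X+(1-\gamma-\lambda)p$ to exceed $-1$, so that $\sum n^{\mathbf{G}_X}b_n^{\,p}\succeq\sum n^{\mathbf{G}_X+(1-\gamma-\lambda)p}=\infty$. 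In either case $\mathbf{\widehat{H}}_{\lambda}(f)\notin X_p$ while $f\in X_p$, which contradicts boundedness, and the necessity of $\lambda\ge1$ follows.

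The main obstacle is the balancing act in the last step: a single monomial-decay profile $(k+1)^{-\gamma}$ must simultaneously keep $f$ inside $X_p$ ($\gamma>s$), keep the image coefficients finite ($\gamma+\lambda>1$), remain in the range $0<\gamma<1$ where the sharp growth rate $n^{1-\gamma-\lambda}$ of $b_n$ is available, and still push the image out of $X_p$ ($\gamma\le s+1-\lambda$). Verifying that these four constraints admit a common solution is exactly where the hypothesis $\lambda<1$ enters, and the two regimes $s<1$ versus $s\ge1$ genuinely occur (they correspond, among the listed examples, to the Hardy/Dirichlet spaces and to the Bergman spaces respectively), so confirming that the relevant interval is non-empty is the delicate point; everything else is routine comparison of $p$-series.
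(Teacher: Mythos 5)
Your argument is correct and follows essentially the same route as the paper's: a test function with power-decaying coefficients $k^{-\gamma}$ with $\gamma$ just above $s=(1+\mathbf{G}_X)/p$, property $(\star)$ applied to both the function and its image, a lower bound $b_n\succeq n^{1-\gamma-\lambda}$, and divergence of the resulting $p$-series when $\lambda<1$ (the paper takes $\gamma=\frac{\mathbf{G}_X+1+\varepsilon}{p}$ with $\varepsilon<p(1-\lambda)$ and gets the same lower bound by an integral comparison). Your case split at $s\geq 1$ and the explicit condition $\gamma+\lambda>1$ guaranteeing finiteness of the image coefficients are refinements the paper glosses over, but the underlying idea is identical.
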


\begin{proof}
It is enough to prove that,  $\mathbf{\widehat{H}}_{\lambda}: X_p\rightarrow X_p$  can not be bounded, if $0<\lambda<1$.

Let $\varepsilon>0$ and set $\widehat{f}_{\varepsilon}=\sum_{k=0}^{\infty}\widehat{a}_k z^k$ with
$\widehat{a}_0=(\frac{\varepsilon}{1+\varepsilon})^{\frac{1}{p}},\, \widehat{a}_k=(\frac{\varepsilon}{1+\varepsilon})^{\frac{1}{p}}k^{-\frac{\mathbf{G}_X+1+\varepsilon}{p}},\, k\geq 1.$
It is easy to see that $\{\widehat{a}_k\}_{k=0}^{\infty}$ is a decreasing sequence and $\sum_{k=1}^{\infty}k^{\mathbf{G}_{X}}\widehat{a}_k^{p}<\infty.$ Hence $\widehat{f}_\varepsilon \in X_p$.

Set
$$b_n=\sum_{k=0}^{\infty}\frac{\widehat{a}_k}{(k+n+1)^{\lambda}}, \, n\in \mathbb{N}_0.$$

We suppose that $\mathbf{\widehat{H}}_{\lambda}: X_p\rightarrow X_p$  is bounded. Then, by the fact that $\{b_n\}_{n=0}^{\infty}$ is a decreasing sequence, we see that $\widehat{f}=\sum_{n=0}^{\infty}b_n z^n \in X_p$ and hence
$$\sum_{n=1}^{\infty} n^{\mathbf{G}_X} b_n^p <\infty.$$
Then
\begin{eqnarray}\label{e-1}
 \infty &>& \sum_{n=1}^{\infty} n^{\mathbf{G}_X} \left[\sum_{k=0}^{\infty}\frac{\widehat{a}_k}{(k+n+1)^{\lambda}}\right]^p  \\ &\geq&  \frac{\varepsilon}{1+\varepsilon}  \sum_{n=1}^{\infty} n^{\mathbf{G}_X} \left[\sum_{k=1}^{\infty}\frac{1}{(k+n+1)^{\lambda}} \cdot  k^{-\frac{\mathbf{G}_X+1+\varepsilon}{p}}\right]^p\nonumber
\end{eqnarray}

On the other hand, from the fact that $k+n+1\leq 2(k+n)$ for all $k ,n \in \mathbb{N}$, we see that
\begin{eqnarray}\label{e-2}
\lefteqn{\sum_{k=1}^{\infty}\frac{1}{(k+n+1)^{\lambda}} \cdot  k^{-\frac{\mathbf{G}_X+1+\varepsilon}{p}}}\\
&\geq& \frac{1}{2^{\lambda}}\sum_{k=1}^{\infty}\frac{1}{(k+n)^{\lambda}} \cdot  k^{-\frac{\mathbf{G}_X+1+\varepsilon}{p} }\nonumber \\
&\geq& \frac{1}{2^{\lambda}}\int_{1}^{\infty} \frac{1}{(x+n)^{\lambda}} \cdot  x^{-\frac{\mathbf{G}_X+1+\varepsilon}{p} }\,dx \nonumber \\ & =&
n^{(1-\lambda)-\frac{\mathbf{G}_X+1+\varepsilon}{p}} \cdot \frac{1}{2^{\lambda}}\int_{\frac{1}{n}}^{\infty} \frac{1}{(1+s)^{\lambda}}\cdot s^{-\frac{\mathbf{G}_X+1+\varepsilon}{p} }\,ds \nonumber \\
&\geq& n^{(1-\lambda)-\frac{\mathbf{G}_X+1+\varepsilon}{p}}\cdot \frac{1}{2^{\lambda}} \int_{1}^{\infty} \frac{1}{(1+s)^{\lambda}}\cdot s^{-\frac{\mathbf{G}_X+1+\varepsilon}{p} }\,ds \nonumber
\end{eqnarray}

Combine (\ref{e-1}) and (\ref{e-2}), we get that
\begin{eqnarray}\label{e-3}
\infty &>& \frac{\varepsilon}{1+\varepsilon} \left[\sum_{n=1}^{\infty}n^{p(1-\lambda)-1-\varepsilon}\right]\cdot \left[\frac{1}{2^{\lambda}} \int_{1}^{\infty} \frac{1}{(1+s)^{\lambda}}\cdot s^{-\frac{\mathbf{G}_X+1+\varepsilon}{p} }\,ds\right]^p.\end{eqnarray}

If $\lambda< 1$,  when $\varepsilon<p(1-\lambda)$, we have $\sum_{n=1}^{\infty}n^{p(1-\lambda)-1-\varepsilon}=\infty.$ Thus (\ref{e-3}) is a contradiction. This means that $\mathbf{\widehat{H}}_{\lambda}: X_p\rightarrow X_p$  can not be bounded if $\lambda< 1$.  The Theorem \ref{rem} is proved.
\end{proof}
\end{remark}

\begin{remark}

From Theorem \ref{main-1},  we know that  $ \mathbf{H}_{\lambda}$ is not bounded from $\mathcal{F}_{\theta, \alpha}^{2}$ to $\mathcal{F}_{\theta, \beta}^{2}$ if $0<\lambda <1+\frac{1}{2}(\beta-\alpha)$.   On the other hand, we notice that, for $\lambda>0$, it holds that
$$\frac{1}{(k+n)^{\lambda}+1}\asymp \frac{1}{(k+n+1)^{\lambda}}\asymp\int_{0}^1t^{k+n}(1-t)^{\lambda-1}dt$$
for all  $k, n\in \mathbb{N}_0$.  To make $ \mathbf{H}_{\lambda}$ bounded from $\mathcal{F}_{\theta, \alpha}^{2}$ to $\mathcal{F}_{\theta, \beta}^{2}$ when $0<\lambda <1+\frac{1}{2}(\beta-\alpha)$.  For $\lambda>0, f=\sum_{n=0}^{\infty}a_n z^n \in \mathcal{H}(\mathbb{C})$,  let $\mu$ be a positive Bore measure on $[0, 1)$,  we define a new  {\em Hilbert-type operator $\mathbf{H}_{\lambda}^{\mu}$} as
\begin{equation*} \mathbf{H}_{\lambda}^{\mu}(f)(z):=\sum_{n=0}^\infty \left(\frac{1}{\sqrt{n!}}\sum_{k=0}^{\infty}\mu_{\lambda}[k+n] a_k\sqrt{k!}\right)z^n.
\end{equation*}
Here
\begin{equation*}\mu_{\lambda}{[k+n]}=\int_{0}^1 t^{k+n}(1-t)^{\lambda-1}d\mu(t),\: k, n\in \mathbb{N}_{0}.\end{equation*}

In view of the proofs of Theorem \ref{main-2} and \ref{main-3}, we can get the following result, which is a generalization of Theorem \ref{main-2} and \ref{main-3}.
\begin{proposition}
Let $\lambda>0, -1<\alpha, \beta<1$.  Let $\mu$ be a positive Bore measure on $[0, 1)$ such that $d\nu(t)=(1-t)^{\lambda-1}d\mu(t)$ is a finite measure on $[0, 1)$, and $\mathbf{H}_{\lambda}^{\mu}$ be as above. Then, for any $\theta>0$,  it holds that

(1) $\mathbf{H}_{\lambda}^{\mu}: \mathcal{F}_{\theta, \alpha}^{2}\rightarrow \mathcal{F}_{\theta, \beta}^{2}$ is bounded if and only if $\nu$ is a $[1+\frac{1}{2}(\beta-\alpha)]$-Carleson measure on $[0, 1)$;

(2) $\mathbf{H}_{\lambda}^{\mu}: \mathcal{F}_{\theta, \alpha}^{2}\rightarrow \mathcal{F}_{\theta, \beta}^{2}$ is compact if and only if $\nu$ is a vanishing $[1+\frac{1}{2}(\beta-\alpha)]$-Carleson measure on $[0, 1)$.
\end{proposition}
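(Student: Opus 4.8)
The plan is to recognize that $\mathbf{H}_{\lambda}^{\mu}$ coincides with the operator of Theorems \ref{main-2} and \ref{main-3} applied to the auxiliary measure $\nu$, so that both assertions reduce immediately to those theorems via a change of measure.

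First I would record the key moment identity. Writing $\nu[m]=\int_0^1 t^m\,d\nu(t)$ for the moments of $\nu$ in the sense of (\ref{mu}), the definition $d\nu(t)=(1-t)^{\lambda-1}d\mu(t)$ gives
\begin{equation*}
\mu_{\lambda}[k+n]=\int_0^1 t^{k+n}(1-t)^{\lambda-1}\,d\mu(t)=\int_0^1 t^{k+n}\,d\nu(t)=\nu[k+n]
\end{equation*}
for all $k,n\in\mathbb{N}_0$. Substituting this into the defining series of $\mathbf{H}_{\lambda}^{\mu}$ shows that $\mathbf{H}_{\lambda}^{\mu}(f)=\mathbf{H}_{\nu}(f)$ for every $f\in\mathcal{H}(\mathbb{C})$, where $\mathbf{H}_{\nu}$ is precisely the Hilbert-type operator of Theorem \ref{main-2} with $\mu$ replaced by $\nu$.

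Next I would verify that the hypotheses of Theorems \ref{main-2} and \ref{main-3} are satisfied for $\nu$. By assumption $d\nu(t)=(1-t)^{\lambda-1}d\mu(t)$ is a finite positive Borel measure on $[0,1)$, which is exactly the standing requirement on the measure in both theorems, and the constraints $-1<\alpha,\beta<1$ are inherited unchanged. Consequently Theorem \ref{main-2} applied to $\nu$ yields that $\mathbf{H}_{\lambda}^{\mu}=\mathbf{H}_{\nu}$ is bounded from $\mathcal{F}_{\theta,\alpha}^2$ to $\mathcal{F}_{\theta,\beta}^2$ if and only if $\nu$ is a $[1+\frac12(\beta-\alpha)]$-Carleson measure, giving assertion (1); and Theorem \ref{main-3} applied to $\nu$ gives the compactness characterization in terms of $\nu$ being a vanishing $[1+\frac12(\beta-\alpha)]$-Carleson measure, giving assertion (2).

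I do not expect a genuine obstacle: once the identity $\mu_{\lambda}[k+n]=\nu[k+n]$ is in hand, the proposition is a verbatim restatement of Theorems \ref{main-2} and \ref{main-3} for the measure $\nu$. The only point deserving a word of care is confirming that the finiteness assumption on $\nu$ is exactly what those proofs use --- it is what guarantees that the moment integrals converge and that the normalized test functions $f_w$ and $\widetilde{f}_w$ constructed there lie in the relevant spaces --- so that no argument beyond the substitution is needed.
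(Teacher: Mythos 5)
Your proposal is correct and is essentially the argument the paper intends: since $\mu_{\lambda}[m]=\nu[m]$ for all $m$, the operator $\mathbf{H}_{\lambda}^{\mu}$ is literally $\mathbf{H}_{\nu}$, and the finiteness of $\nu$ puts you in the hypotheses of Theorems \ref{main-2} and \ref{main-3}. The paper gives no written proof beyond invoking those theorems, so your explicit reduction via the moment identity is the same route, just spelled out.
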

\end{remark}

\section*{Acknowledgement}
We thank the referee for invaluable comments and useful suggestions on this paper.

\end{document}